\title[Directional discrepancy: interval of rotations]{Lower bounds for the directional discrepancy with respect to an interval of rotations}%{Directional Discrepancy on Restricted Intervals in the Plane}
\date{\today}
\author[D. Bilyk]{Dmitriy Bilyk}
\address{School of Mathematics, University of Minnesota, Minneapolis, MN 55455} 
\email{dbilyk@math.umn.edu, michmast@math.umn.edu}
\author[M. Mastrianni]{Michelle Mastrianni}
\subjclass[2000]{Primary 11K38, 42B05; Secondary 52A10}
\thanks{Both authors have been supported by the National Science Foundation (grant DMS 2054606).} %  ``Optimal Measures and Point Configurations: A Harmonic Analysis Approach".}
\keywords{discrepancy, average decay of  the Fourier transform}
\newtheorem{theorem}{Theorem}[section]
\newtheorem{lemma}[theorem]{Lemma}
\newtheorem{thm}[theorem]{Theorem}
\newtheorem{corollary}[theorem]{Corollary}
\newtheorem{proposition}[theorem]{Proposition}
\theoremstyle{definition}
\newtheorem{definition}[theorem]{Definition}
\newtheorem{remark}[theorem]{Remark}
\numberwithin{equation}{section}
\newcommand{\fatone}{\mathds{1}}
\newcommand{\area}{\text{area}}
\begin{document}
\maketitle
\begin{abstract}
We  show that the lower bound for the optimal directional discrepancy with respect to the class of rectangles in $\mathbb{R}^2$ rotated in a restricted interval of directions $[-\theta,\theta]$ with $\theta < \frac{\pi}{4}$  is of the order at least $N^{1/5}$ with a constant depending on $\theta$. %This is the first such lower bound given in the literature. %
\end{abstract}

\section{introduction}

In the present paper we discuss the directional discrepancy in the plane. Consider a set  $\Omega \subset \left[ -\frac{\pi}{4},\frac{\pi}{4}\right]$,  which we  shall call the \emph{allowed rotation set}. 
Let  the class of sets $\mathcal{R}_{\Omega}$ contain all rectangles in $\mathbb{R}^2$ which make an angle of $\omega$ with the $x$-axis, where $\omega \in \Omega$:
\begin{equation}\label{eq:defROmega}
\mathcal{R}_{\Omega} = \{\text{rectangles } R: \text{ one side of } R \text{ makes angle } \omega \in \Omega \text{ with the x-axis}.  \}
\end{equation}
We then define the directional discrepancy of an $N$-point set $P \subset [0,1]^2$ in the directions given by $\Omega$ as the extremal discrepancy of $P$  with respect to elements of $\mathcal R_\Omega$. In other words, 
\begin{equation}\label{eq:defdiscr}
 D (P,  \mathcal{R}_{\Omega} ) = \sup_{R \in \mathcal{R}_{\Omega}} \big| |P \cap R | -  N \cdot \textup{vol} (R \cap [0,1]^2 ) \big| ,
 \end{equation}
and the optimal \textit{directional discrepancy} is 
\begin{equation}\label{eq:defdiscrN}
D (N, \mathcal{R}_{\Omega} ) = \inf_{|P|= N}  D ( P, \mathcal{R}_{\Omega} ).
\end{equation} 
The two extreme cases of the directional discrepancy are well studied and yield very different behavior of optimal asymptotic estimates.
\begin{itemize}
\item {\bf{No rotations. Axis-parallel rectangles.}}  When $\Omega = \{0\}$ is a singleton  (i.e. all rectangles point in a fixed direction), one recovers the classical case of axis-parallel rectangles, which results in  very small,  logarithmic discrepancy \cite{Schmidt,Lerch}:
\begin{equation}\label{eq:ap}
D (N, \{ 0\} ) \approx \log N. 
\end{equation}
However more complex classes of test set yield much larger discrepancy estimates:
\item {\bf{All directions. Arbitrarily rotated rectangles.}} On the other hand, the case of all directions, i.e.  $\Omega = \left[-\frac{\pi}{4},\frac{\pi}{4}\right] $ (that is, when $\mathcal{R}_{\Omega}$ consists of all arbitrarily rotated rectangles) is also fairly well-understood: we have polynomial discrepancy with bounds
\begin{equation}\label{eq:Beck}
N^{1/4} \lesssim \mathcal{D}(N,\mathcal{R}_{[-\frac{\pi}{4},\frac{\pi}{4}]}) \lesssim N^{1/4}\sqrt{\log N}.
\end{equation}
Both the lower and upper bound are due to Beck \cite{Beck87}; in \S \ref{sec:allrot} we outline Beck's proof of the lower bound in the case of all rotations. Perhaps unsurprisingly,  discrepancy estimates are almost identical in the case of discs in $\mathbb R^2$ \cite{Montgomery}.
\end{itemize}

Our main goal in this paper is to understand what happens ``in between'' these two extreme cases (a single direction and all possible directions). The broader question is about the interplay between the geometry of the set $\Omega$ of allowed rotations and the sharp discrepancy bounds with respect to $\mathcal{R}_{\Omega}$ (or, even more generally, the influence of the  geometric properties of the class of test sets on discrepancy). 

Various different choices of $\Omega$, as well as their geometric characteristics, are of interest, such as Cantor-type sets, different infinite sequences of directions, and sets with given (Hausdorff or Minkowski) dimension. Upper bounds for some of these cases have been studied in \cite{BMPS11,BMPS16}, where the arguments rely heavily on the Diophantine properties of the allowed rotation set $\Omega$.

However, we are not aware  of any results about lower bounds for these intermediate cases in the literature. Already the simplest case when $\Omega$ is an \emph{interval} of directions is not easy to understand and produces many open questions. In this paper we prove the first  result for this case: namely, a lower bound of the order $N^{1/5}$. Our main result, proven in \S \ref{sec:proof}, is stated more precisely below.

\begin{thm}
\label{thm:restrictedinterval}
Let $\Omega = [-\theta, \theta]$ with $\theta < \frac{\pi}{4}$, i.e. $\Omega$ is a restricted interval of rotations. Then there is a constant $\gamma >0 $ such that for all  $N \ge \gamma \theta^{-5}$,
$$D(N, \mathcal{R}_{\Omega}) \ge c N^{1/5}$$
for some constant $c$ independent of $\theta$.
\end{thm}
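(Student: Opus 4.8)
The plan is to adapt Beck's Fourier-analytic argument for arbitrarily rotated rectangles (reviewed in \S\ref{sec:allrot}) to the narrow family $\mathcal{R}_{[-\theta,\theta]}$, keeping track of how the restriction of $\Omega$ to a short interval enters quantitatively. Fix $P$ with $|P|=N$ and write $\lambda=\sum_{p\in P}\delta_p-N\,\mathbf 1_{[0,1]^2}\,dx$ for the signed discrepancy measure; after the routine reduction to rectangles lying in a fixed dilate of $[0,1]^2$, $D(P,\mathcal R_\Omega)^2$ dominates every weighted average of $|\lambda(R)|^2$ over $R\in\mathcal R_\Omega$. I would average over rectangles whose long side has length $\asymp 1$, whose short side has length $\asymp\delta$ for a parameter $\delta$ to be chosen, whose direction $\omega$ is uniform in $[-\theta,\theta]$, and over all translations; Parseval in the translation variable turns this average into $\int_{\R^2}|\widehat\lambda(\xi)|^2\,\Phi_\delta(\xi)\,d\xi$, where $\Phi_\delta(\xi)$ is the corresponding average of $|\widehat{\mathbf 1_{R_0}}(\xi)|^2$ over the rotation and the short side, $R_0$ being the rectangle centred at the origin.

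The first step is the estimate for $\Phi_\delta$. The long side of $R_0$ contributes a factor $\operatorname{sinc}^2\langle\xi,u_\omega\rangle$, so the $\omega$-average localizes to directions nearly orthogonal to $\xi$; as $\omega$ runs over $[-\theta,\theta]$ the relevant frequencies fill a double sector $S_\theta$ of angular width $\asymp\theta$ about the $\xi_2$-axis, and the localization genuinely gains only once an arc of angular width $\theta$ at radius $|\xi|$ has length $\gtrsim1$, i.e.\ once $|\xi|\gtrsim\theta^{-1}$. Averaging the short-side factor over lengths $\asymp\delta$ then produces, on $S_\theta$, the directional analogue of the $|\xi|^{-3}$ \emph{average Fourier decay} of a planar convex body, and integrating against $|\widehat\lambda|^2$ leaves
\[
D(P,\mathcal R_\Omega)^2\ \gtrsim\ \frac1\theta\int_{\xi\in S_\theta,\ \theta^{-1}\lesssim|\xi|\lesssim\Lambda}\frac{|\widehat\lambda(\xi)|^2}{|\xi|^3}\,d\xi,
\]
where the outer cutoff $\Lambda$ is governed by $\delta$ and, ultimately, by how far the lower bound on $\widehat\lambda$ over $S_\theta$ can be pushed.

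The second step --- a lower bound for this integral over the thin sector $S_\theta$ --- is the one I expect to be the main obstacle. In Beck's unrestricted problem $S_\theta$ is all of $\R^2$, one may take $\Lambda\asymp\sqrt N$, and the integral is $\gtrsim\sqrt N$ (giving $N^{1/4}$); here the angular restriction is genuine --- there is no cheap lower bound for $\widehat\lambda$ on $S_\theta$ alone, since a set well distributed for axis-parallel rectangles can keep $\widehat\lambda$ comparatively small throughout $S_\theta$ out to a substantial radius --- so one must argue directly that $\widehat\lambda$ cannot be uniformly small on $S_\theta$ over a long enough range of radii. This I would do by testing $|\widehat\lambda|^2$ against a nonnegative weight $w$ supported on a slab $S_\theta\cap\{|\xi|\asymp\Lambda\}$ whose inverse transform is a positive approximate identity at the dual scales $(\theta\Lambda)^{-1}\times\Lambda^{-1}$, writing $\int|\widehat\lambda|^2w=\iint\check w(x-y)\,d\lambda(x)\,d\lambda(y)$, and playing the diagonal main term $\asymp N\|w\|_1$ against the cross term and the purely continuous term, the latter being controlled through the decay of $\widehat{\mathbf 1_{[0,1]^2}}$ along the sector. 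The main term dominates only for $\Lambda$ large compared with a fixed power of $N$; combined with the requirement $\theta^{-1}\lesssim\Lambda$ from the first step, this leaves an admissible window for $\Lambda$ exactly when $N\gtrsim\theta^{-5}$, and optimizing over that window gives a bound of the shape $D(P,\mathcal R_\Omega)\gtrsim(\theta N)^{1/4}$ --- recovering Beck's $N^{1/4}$ when $\theta$ stays bounded away from $0$, and reducing to the stated $N^{1/5}$ precisely in the regime $N\gtrsim\theta^{-5}$, with a $\theta$-independent constant. It is exactly this interplay --- forcing $|\xi|\gtrsim\theta^{-1}$ while still keeping the diagonal term ahead of the errors on a slab of measure only $\asymp\theta$ times its axis-parallel counterpart --- that both costs the exponent $1/4\to 1/5$ and produces the threshold $N\gtrsim\theta^{-5}$. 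A further delicate point is the behaviour near $\partial S_\theta$, where the rotational localization degrades and the clean asymptotic for $\Phi_\delta$ must be replaced by a one-sided estimate.
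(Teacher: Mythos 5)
Your set-up matches the paper's in spirit, and you correctly isolate the real obstruction: after Parseval, everything reduces to lower-bounding $\sum_{m\in S_\theta,\,\theta^{-1}\lesssim|m|\lesssim\Lambda}|\widehat\mu(m)|^2$ over a thin sector, and Beck's amplification cannot be run because $\widehat\mu$ could, a priori, carry almost none of its mass there. The gap is in your proposed fix. You want a nonnegative weight $w$ supported on the slab $S_\theta\cap\{|\xi|\asymp\Lambda\}$ --- a set bounded away from the origin --- whose inverse Fourier transform $\check w$ is a nonnegative approximate identity. No such $w$ exists: if $\check w\ge 0$ and $\check w\not\equiv 0$, then $w(0)=\int\check w>0$, while your support condition forces $w(0)=0$. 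This is precisely why the Montgomery--Fej\'er kernel argument (Lemma \ref{lem:montgomerylemma}) applies only to convex symmetric bodies $U$ containing the origin, with a \emph{fixed} punctured neighbourhood removed: the Fej\'er kernel of $U$ is nonnegative exactly because its Fourier support is the full centred body $U$, and this nonnegativity is what lets the diagonal term $N\|\check w\|_1$ dominate the off-diagonal sum. Without a valid kernel, the diagonal term you invoke is never cleanly isolated, and your conclusion $D\gtrsim(\theta N)^{1/4}$ --- which for fixed $\theta$ would give $N^{1/4}$ and is explicitly flagged in the paper as an open problem --- is not established.

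The paper's actual route around the obstruction is structurally different. It tiles the sector in Fourier space by a family of $2M+1\approx\theta X/Y$ long thin rectangles $R_j$, each a rotation of the centred box $[-\frac{X}{2},\frac{X}{2}]\times[-\frac{Y}{2},\frac{Y}{2}]$ by a multiple of $Y/X$; each $R_j$ is a convex symmetric body \emph{through the origin}, so Lemma \ref{lem:montgomerylemma} applies to it individually. One then needs a constant $\rho$ with $\rho\,\Phi(m)\le\varphi_{R,\theta}(m)$ for $|m|\gtrsim 1$, where $\Phi$ is the tiling multiplicity. Bounding $\Phi(m)\lesssim\min(\theta X/Y,\,X/|m|)$ and matching the sectorial decay $\varphi\gtrsim\theta^{-1}|m|^{-3}$ for $|m|\ge Y$ against the weaker, direction-free decay $\varphi\gtrsim|m|^{-4}$ for $|m|\le Y$ (this is how the origin-adjacent part of each $R_j$, which cannot be excised, gets absorbed) forces
\[
\rho\lesssim\min\Bigl(\tfrac{1}{\theta X^{3}},\,\tfrac{1}{\theta XY^{3}}\Bigr).
\]
With $XY\asymp N$ the optimum is $X\asymp N^{3/5}$, $Y\asymp N^{2/5}$, $\rho\asymp\theta^{-1}N^{-9/5}$, and the requirements $\theta X\gtrsim Y$, $Y\gtrsim\theta^{-1}$ produce the threshold $N\gtrsim\theta^{-5}$. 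Summing the per-rectangle Montgomery bounds over the $\approx\theta X/Y$ tiles gives $D^{2}\gtrsim\rho\cdot(\theta X/Y)\cdot N^{2}\approx N^{2/5}$, with the $\theta$'s cancelling --- which is exactly why the exponent stalls at $1/5$ with a $\theta$-independent constant rather than improving to the $(\theta N)^{1/4}$ your heuristic predicts.
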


One can easily restate this result as a bound for all $N\in \mathbb N$, but with a constant that depends on the length of the interval.

\begin{corollary}\label{cor:thm}
Let $\Omega = [-\theta, \theta]$ with $\theta < \frac{\pi}{4}$. There exists a constant $c'>0$ such that for all  $N\in \mathbb N$,
$$D(N, \mathcal{R}_{\Omega}) \ge c' \theta N^{1/5}.$$
\end{corollary}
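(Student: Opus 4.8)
The corollary is a purely formal consequence of Theorem~\ref{thm:restrictedinterval}; the only work is to handle the range of small $N$ that the theorem does not cover, and this is done by a trivial monotonicity/scaling observation. So the proof is a two-line deduction, and below I describe exactly which inequalities to chain together.

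\textbf{Step 1: the easy regime.} Suppose first that $N \ge \gamma \theta^{-5}$, where $\gamma$ is the constant from Theorem~\ref{thm:restrictedinterval}. Then the theorem already gives $D(N,\mathcal R_\Omega) \ge c N^{1/5}$ with $c$ independent of $\theta$. Since $\theta < \frac{\pi}{4} < 1$, we have $\theta < 1$, hence $c N^{1/5} \ge c\,\theta\, N^{1/5}$. So in this regime the corollary holds with any $c' \le c$.

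\textbf{Step 2: the small-$N$ regime.} Now suppose $1 \le N < \gamma \theta^{-5}$. The key point is that the discrepancy function is nonincreasing in nothing obvious, so instead we simply use a crude universal lower bound: for \emph{every} nonempty class of test sets that contains a rectangle $R_0$ with $0 < \mathrm{vol}(R_0 \cap [0,1]^2) < 1$, one has $D(N,\mathcal R_\Omega) \ge c_0 > 0$ for an absolute constant $c_0$ (indeed, for any $N$-point set $P$ one can shrink or translate such a rectangle so that its expected count $N\cdot\mathrm{vol}$ is, say, $\frac12$ while its actual count is an integer, forcing a discrepancy of at least $\frac12$; one can make this uniform). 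Actually it is cleanest to note the trivial bound $D(N,\mathcal R_\Omega)\ge D(N,\{0\})\gtrsim \log N \ge \log 2 > 0$ using \eqref{eq:ap} and the inclusion $\{0\}\subset\Omega$ (valid since $0\in[-\theta,\theta]$), which already gives $D(N,\mathcal R_\Omega)\ge c_1$ for an absolute constant $c_1>0$ and all $N\ge 2$; the case $N=1$ is handled separately and trivially. Now in this regime $N < \gamma\theta^{-5}$ means $N^{1/5} < \gamma^{1/5}\theta^{-1}$, i.e. $\theta N^{1/5} < \gamma^{1/5}$. Hence $c' \theta N^{1/5} < c'\gamma^{1/5}$, which is $\le c_1$ as soon as we choose $c' \le c_1 \gamma^{-1/5}$. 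Therefore $D(N,\mathcal R_\Omega) \ge c_1 \ge c'\theta N^{1/5}$ in this regime too.

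\textbf{Step 3: conclusion.} Taking $c' = \min\{\,c,\; c_1\gamma^{-1/5}\,\}$ (where $c,\gamma$ come from Theorem~\ref{thm:restrictedinterval} and $c_1$ from the trivial bound via \eqref{eq:ap}), both regimes are covered and $D(N,\mathcal R_\Omega)\ge c'\theta N^{1/5}$ for all $N\in\mathbb N$. \textbf{The only ``obstacle''} is cosmetic: making sure the small-$N$ universal lower bound $c_1$ is genuinely absolute (independent of $\theta$), which the inclusion $\{0\}\subset[-\theta,\theta]$ together with Schmidt's bound \eqref{eq:ap} supplies at once — no geometry beyond that of axis-parallel boxes is needed. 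I would present this as a short paragraph immediately following the statement of the corollary.
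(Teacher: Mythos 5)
Your argument is correct and is essentially the paper's proof: both split into the regime $N\ge\gamma\theta^{-5}$ covered by Theorem~\ref{thm:restrictedinterval} (using $\theta<1$) and the regime $N<\gamma\theta^{-5}$, where a uniform constant lower bound together with $\theta N^{1/5}<\gamma^{1/5}$ finishes the job. The only cosmetic difference is that the paper gets the constant lower bound from the trivial observation that a rectangle of area $\tfrac{1}{4N}$ forces discrepancy at least $\tfrac14$ (as in \eqref{eqn:trivialbound}), rather than invoking Schmidt's theorem \eqref{eq:ap}; both routes work.
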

\noindent It is natural, in accordance with \eqref{eq:ap}, that the constant decays as $\theta$ goes to zero.

Part of the proof of these results is based on and inspired by  the arguments recently developed by Brandolini and Travaglini  in \cite{BT} for discrepancy estimates with respect to dilations and translations of a convex body with given local  convexity and smoothness properties.

It remains unclear whether or not it is possible to improve this lower bound for directional discrepancy on restricted intervals. In particular, one might expect that it should have order $N^{\frac{1}{4}}$, just as in the case of all rotations, but this question is wide open.

%In \S 3, we extend this argument to the case when $\Omega$ is a Cantor set and prove the following lower bound on the discrepancy with respect to $\mathcal{R}_{\Omega}$ in that case. Our main result in \S 3 is as follows.
%
%\begin{thm}
%\label{thm:cantorsets}
%Let $0 < \alpha < \frac{1}{2}$ and let $I_{1,1}$ and $I_{1,2}$ be the intervals $[0,\alpha]$ and $[1-\alpha, 1]$ respectively. We iteratively remove intervals: if at step $k-1$ we have defined intervals $I_{k-1,1}, I_{k-1,2}, \cdots, I_{k-1, 2^{k-1}},$ then we define $I_{k,1}, I_{k,2}, \cdots, I_{k, 2^k}$ by deleting from each $I_{k-1, j}$ an interval of length $(1-2\alpha)\alpha^{k-1}$. If we let the resulting Cantor set be defined as
%$$\mathcal{C}(\alpha) = \bigcap_{k=0}^{\infty} \bigcup_{j=1}^{2^k} I_{k,j},$$
%then with $\delta(\alpha) = \log(2)/\log(1/(1-2\alpha))$,
%$$D(N, \mathcal{R}_{\mathcal{C}(\alpha)}) \gtrsim N^{1/(7-2\delta(\alpha))}.$$
%\end{thm}

We proceed in \S \ref{sec:allrotations} with an exposition of the Fourier transform decay estimates, which  are used, in particular, in the proof of the $\Omega(N^{1/4})$ lower bound \eqref{eq:Beck} for the  discrepancy in the extreme case of all rotations, see \S\ref{sec:allrot}. We adapt the Fourier bounds to our problem, see Proposition \ref{p:fourier}, and explain  why the proof technique for all rotations does not generalize easily to the case of a restricted interval. In \S\ref{s:BT} we outline some relevant   results and ideas from \cite{BT}  and then  give a proof of Theorem \ref{thm:restrictedinterval} in \S\ref{sec:proof}. 

Throughout the paper, we shall use the notation $A \lesssim B$ meaning that there exists an absolute constant $C$ such that $A\le CB$ (the implicit constant is assumed to be independent of $N$ and $\theta$ and other relevant parameters; in particular, we shall carefully trace the dependence on $\theta$). The relation $A\approx B$ means that $A\lesssim B$ and $B \lesssim A$. The constants $c$, $c'$, $c_i$ etc. are not necessarily the same from line to line.

\begin{remark}\label{rem:discr}
The proof of Theorem \ref{thm:restrictedinterval} presented in \S \ref{sec:restrictedintervals} employs Fourier series, and therefore, treats the unit square $[0,1]^2$ as the torus $\mathbb T^2$, and  from that viewpoint, rectangles in the class $\mathcal{R}_{\Omega}$ are ``periodic'' rectangles. This gives rise to a slightly different notion of discrepancy than the one defined in \eqref{eq:defdiscr}, where one looks at the intersection of rectangles with the unit square rather than  periodic extensions. It is easy to see, however, that the lower bound for the discrepancy with respect to  periodic rectangles immediately yields a lower bound for the former discrepancy \eqref{eq:defdiscr}. This can be seen from Figure \ref{fig:rotatedrects1}. Indeed, if  a periodic rectangle has  large discrepancy, then at least one of at most four intersections with the unit square also has large discrepancy, possibly with different constant.
%local discrepancy $f(N)$, then at least one of the four intersections shown has local discrepancy at least $\frac{1}{4}f(N)$ (say, the top right piece), since the discrepancy of unions of sets is subadditive. %This then implies a discrepancy lower bound of the same order for the class $\mathcal{R}_{\Omega}^{\operatorname{[0,1]^2}}$, possibly with different constant.

%uses Fourier analysis on the torus, and from that viewpoint, rectangles in the class $\mathcal{R}_{\Omega}$ are periodic rectangles. We note that a lower bound for the discrepancy of periodic rectangles also gives a lower bound of the same order for the class of rectangles intersected with the unit square,
 %$$\mathcal{R}_{\Omega}^{\operatorname{[0,1]^2}} = \{R \cap [0,1]^2: R \text{ is a rectangle in } \mathbb{R}^2 \text{ making angle } \omega \in \Omega \text{ with the x-axis}. \}$$

 \begin{figure}[H]
\centering
\includegraphics[scale=0.4]{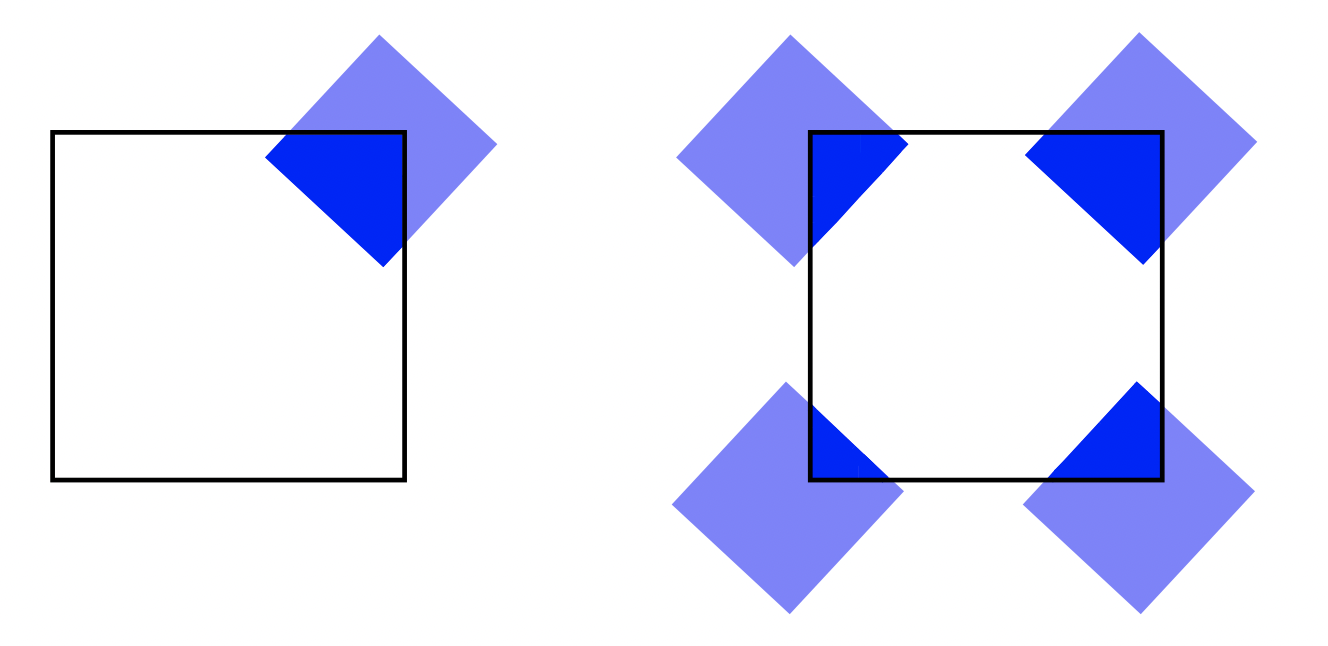}
\caption{On the left, a rotated rectangle intersected with the unit square; on the right, the same rectangle viewed as periodic.}
\label{fig:rotatedrects1}
\end{figure}

Finally, we remark that the lower bounds we obtain are for the $L^2$ discrepancy, i.e., appropriate quadratic averages of the discrepancy over rectangles in $\mathcal{R}_{\Omega}$, which naturally implies lower bounds for the supremum (extremal) discrepancy as defined in \eqref{eq:defdiscr}--\eqref{eq:defdiscrN}.
\end{remark}

\section{Fourier transform techniques}
\label{sec:allrotations}

Our proof will rely on the average decay estimates for the Fourier transform of the test sets. This technique is often employed in various geometric problems (including discrepancy) in which rotational invariance or curvature is present, see e.g. \cite{Beck87,BCIT,BT,Ios}. In particular, the Fourier transform method is used in Beck's  proof  \cite{Beck87} of the  lower bound for the  discrepancy with respect to rectangles rotated in arbitrary directions \eqref{eq:Beck}, which is beautifully presented in \cite[Sec. 7.1]{Matousek} and which we briefly explain in \S\ref{sec:allrot}. 

We now present the average  Fourier decay bounds relevant to our problem.  Instead of rectangles $\mathcal{R}_{[-\theta, \theta]}$, we shall restrict our attention just to the class of  {\it{rotated squares}} $\mathcal{S}_{[-\theta, \theta]}$.  Let $\fatone_{r,\nu}$ denote the indicator function of  the square centered at the origin with ``radius'' $r$ and making angle $\nu$ with the $x$-axis, i.e. the square $[-r,r]^2$ rotated by the angle $\nu$ counterclockwise.  We shall be interested in the behavior of the following quantity: 
\begin{equation}\label{eq:phi}
\varphi_{R,\theta}(\xi) =    \frac{1}{R {\theta}} \int_{R/2}^R  \int_{-\theta}^{\theta} |\widehat{\fatone}_{r,\nu}(\xi)|^2  \hspace{0.1cm} d\nu \hspace{0.1cm} dr, \,\,\,\,\,\,\, \xi  = (\xi_1, \xi_2)\in \mathbb R^2, 
\end{equation}
where $R>0$ and $\theta \in (0,\pi/4]$, i.e. this is  the squared  Fourier transform of the indicator of the square averaged over rotations in the interval $[-\theta, \theta]$ and dilations between $R/2$ and $R$. The relevance of this quantity to discrepancy estimates will become apparent in, e.g., \S\ref{sec:allrot} and \S\ref{sec:proof}.

For a fixed direction $\nu =0$, it is easy to compute
\begin{equation}\label{eq:nu0}\widehat{\fatone}_{r,0}(\xi) =\Big(\int_{-r}^r e^{-2\pi i x_1 \xi_1} dx_1\Big)\Big(\int_{-r}^r e^{-2\pi i x_2 \xi_2} dx_2 \Big)  =   \frac{\sin(2\pi \xi_1 r)}{\pi \xi_1} \cdot  \frac{\sin(2\pi \xi_1 r)}{\pi \xi_1}.
\end{equation}
Zeros of this function explain the need for averaging over dilations: it ``smears'' the function, thus eliminating the zeros and allowing one to obtain uniform lower bounds for $\varphi_{R,\theta}(\xi)$. 

We have the following lower bounds for the average Fourier decay $\varphi_{R,\theta}(\xi)$:
\begin{proposition}\label{p:fourier}
Let $\theta \in (0, \pi/4)$ and $R>0$. 
\begin{enumerate}[(i)]
\item\label{t1} There exists a constant $c'>0$ such that 
\begin{equation}\label{eq:f1}
\varphi_{R,\theta}(\xi) \gtrsim  \frac{R}{\theta |\xi|^3}
\end{equation}
  {whenever $\operatorname{arg} (\xi) \in ( -\theta/2, \theta/2)$ and $| \xi | \ge \frac{c'}{\theta R}$. }
\item\label{t2} There exists a constant $c''>0$ such that 
\begin{equation}\label{eq:f2}
\varphi_{R,\theta}(\xi) \gtrsim  \frac{1}{ |\xi|^4}
\end{equation}
  {for all $\xi \in \mathbb R^2$ with $| \xi | \ge \frac{c''}{ R}$. }
\end{enumerate}
\end{proposition}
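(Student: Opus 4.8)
The plan is to compute the Fourier transform of $\fatone_{r,\nu}$ explicitly, reduce the problem to estimating a one-dimensional oscillatory average, and then obtain the two lower bounds from the same elementary analysis by examining which factor carries the decay. Since the square $[-r,r]^2$ rotated by $\nu$ has indicator function $\fatone_{r,\nu}(x) = \fatone_{r,0}(\rho_{-\nu} x)$, where $\rho_\nu$ is rotation by $\nu$, the Plancherel/change-of-variables identity gives $\widehat{\fatone}_{r,\nu}(\xi) = \widehat{\fatone}_{r,0}(\rho_{-\nu}\xi)$. Combining with \eqref{eq:nu0}, and writing $\xi = |\xi|(\cos\alpha, \sin\alpha)$ with $\alpha = \arg(\xi)$, we get
\begin{equation*}
|\widehat{\fatone}_{r,\nu}(\xi)|^2 = \frac{\sin^2\!\big(2\pi r |\xi| \cos(\alpha - \nu)\big)}{\pi^2 |\xi|^2 \cos^2(\alpha-\nu)} \cdot \frac{\sin^2\!\big(2\pi r |\xi| \sin(\alpha - \nu)\big)}{\pi^2 |\xi|^2 \sin^2(\alpha-\nu)}.
\end{equation*}
So $\varphi_{R,\theta}(\xi)$ is the average of this product over $r \in [R/2, R]$ and $\nu \in [-\theta,\theta]$. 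The first step is this reduction; it is routine.

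For part \eqref{t1}, I would restrict the $\nu$-integration to the sub-interval where $|\alpha - \nu|$ is small: if $\arg(\xi) = \alpha \in (-\theta/2, \theta/2)$, then as $\nu$ ranges over $[-\theta,\theta]$ the difference $\alpha - \nu$ still ranges over an interval of length $\gtrsim \theta$ containing $0$, so we may integrate over a sub-interval of $\nu$'s of length $\approx\theta$ on which $|\sin(\alpha-\nu)| \approx |\alpha - \nu|$ stays below, say, $\tfrac{1}{8\pi r |\xi|}$; that sub-interval has length $\approx \frac{1}{r|\xi|}$, and the condition $|\xi| \gtrsim \frac{1}{\theta R}$ guarantees it is genuinely smaller than $\theta$ so the restriction is nonempty and costs a factor $\approx \frac{1}{\theta r |\xi|}$. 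On that set the second sine factor satisfies $\sin^2(2\pi r|\xi|\sin(\alpha-\nu)) \big/ \sin^2(\alpha-\nu) \gtrsim r^2 |\xi|^2$ (using $\sin^2 t \gtrsim t^2$ for small $t$). For the first factor, $\cos(\alpha-\nu)\approx 1$, so we are left with averaging $\sin^2(2\pi r|\xi|)/(\pi^2|\xi|^2)$ over $r \in [R/2, R]$; since $2\pi r|\xi|$ traverses an interval of length $\pi R |\xi| \gtrsim \pi c'/\theta \gg 1$, this average is $\gtrsim 1/|\xi|^2$ — this is precisely the ``smearing'' mechanism noted after \eqref{eq:nu0}. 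Multiplying the three contributions: $\frac{1}{\theta}\cdot\frac{1}{r|\xi|}$ (shrunken $\nu$-set, normalized) $\times\; r^2|\xi|^2$ (second factor) $\times\;\frac{1}{|\xi|^2}$ (dilation-averaged first factor) $\approx \frac{r}{\theta|\xi|}\cdot\frac{1}{|\xi|^2} \approx \frac{R}{\theta |\xi|^3}$.

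For part \eqref{t2} we do not get to assume $\arg(\xi)$ is near the allowed directions, so both $\cos(\alpha-\nu)$ and $\sin(\alpha-\nu)$ can be of size $\approx 1$ over the whole range of $\nu$. In that regime each sine-squared-over-denominator factor is, after averaging in $r$ over $[R/2,R]$ (which sweeps the argument across an interval of length $\gtrsim 1$ as soon as $|\xi| \gtrsim 1/R$), bounded below by $\gtrsim 1/|\xi|^2$, giving the product $\gtrsim 1/|\xi|^4$; if one of the two linear arguments happens to be small for all $\nu$ — this happens only when $\xi$ points (nearly) along a coordinate direction of the rotated square, i.e. $\alpha$ is within $O(1/(r|\xi|))$ of $\nu + k\pi/2$ — then that factor is instead $\approx r^2|\xi|^2/|\xi|^2 = r^2 \gtrsim R^2$, which is even larger, and the other factor still contributes $\gtrsim 1/|\xi|^2$, so $1/|\xi|^4$ holds a fortiori. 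One must check the boundary cases where $\alpha - \nu$ is of intermediate size, but a dyadic decomposition of $[-\theta,\theta]$ in $|\alpha-\nu|$ (or simply discarding all but a fixed-length favorable sub-interval of $\nu$) handles this uniformly.

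The main obstacle is part \eqref{t1}: one must juggle three simultaneous constraints — that the restricted $\nu$-interval on which $|\sin(\alpha-\nu)|$ is small is nonempty (forcing the hypothesis $|\xi|\gtrsim \frac{c'}{\theta R}$), that on that interval the small-argument lower bound $\sin^2 t \gtrsim t^2$ applies to the second sine factor, and that the $r$-average of the \emph{first} sine factor does not vanish (needing $R|\xi|\gtrsim 1$, which is implied by the former). Getting the $\theta$-dependence to come out as exactly $R/(\theta|\xi|^3)$ rather than something weaker requires being careful that the only loss from the $\nu$-restriction is the single factor $\frac{1}{\theta r|\xi|}$ and that this is exactly compensated by the $r^2|\xi|^2$ gain from the steep second factor. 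Part \eqref{t2} is comparatively soft, since without the angular hypothesis one simply picks whichever of the two factors is well-behaved and the averaging over $r$ does the rest.
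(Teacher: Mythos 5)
Your part (i) is essentially the paper's argument, carried out directly in $\alpha = \arg\xi$ rather than first normalizing to $\xi = (|\xi|,0)$ and recovering the general case by a shift of the $\nu$-interval; in both cases the key is to restrict the $\nu$-average to a sub-interval of length $\approx (r|\xi|)^{-1}$ near $\alpha$, linearize the steep sine there to gain $r^2|\xi|^2$, and use the $r$-average (as in \eqref{eqn:smallestimate2}) to bound the remaining factor below by $\approx |\xi|^{-2}$. (The phrase ``a sub-interval of $\nu$'s of length $\approx\theta$'' must be a slip; as you correctly write a clause later, that sub-interval has length $\approx (r|\xi|)^{-1}$.)

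Part (ii) contains a genuine, though isolated, logical error. In the sub-case where $|\cos(\alpha-\nu)|$ and $|\sin(\alpha-\nu)|$ are both $\approx 1$, you argue that since each of the two squared-sine factors has $r$-average $\gtrsim |\xi|^{-2}$, the $r$-average of their product is $\gtrsim |\xi|^{-4}$. But ``mean of a product $\geq$ product of the means'' is not a valid inequality. The correct step, as in the paper, is to estimate the joint average
\begin{equation*}
\frac{2}{R}\int_{R/2}^R \sin^2(2\pi\xi_1 r)\,\sin^2(2\pi\xi_2 r)\,dr \gtrsim 1, \qquad |\xi_1|,|\xi_2| \gtrsim R^{-1},
\end{equation*}
directly, e.g.\ by expanding $\sin^2 a\,\sin^2 b = \tfrac14 - \tfrac14\cos 2a - \tfrac14\cos 2b + \tfrac18\cos(2a-2b) + \tfrac18\cos(2a+2b)$ and bounding the $r$-average of each oscillatory term; the only one whose average may not be small is $\cos(2a-2b)$, with frequency $|\xi_1-\xi_2|$, but when that frequency is small the term is near $+\tfrac18$ and only helps. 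Your other sub-case (one argument is pointwise small so the corresponding factor is $\approx r^2 \approx R^2$ throughout) is sound, since there you are multiplying a pointwise lower bound by an averaged one, which is legitimate.
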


Before we proceed to the proof of this proposition we would like to make a couple  remarks:

\begin{remark} In the case when $\theta = \pi/4$, i.e. if we consider squares rotated in {\it{all}} directions, the decay estimate \eqref{eq:f1} of part \eqref{t1} holds independently of $\operatorname{arg} (\xi)$, i.e. for all $\xi$ large enough. Precisely this estimate allows one to obtain the lower bound \eqref{eq:Beck} for the discrepancy of arbitrarily rotated rectangles, see the discussion in \S\ref{sec:allrot}.  When $\theta < \pi/4$, however, i.e. in the case of restricted intervals, this estimate holds only on a sector with aperture slightly smaller than the interval of rotations, which prevents one from obtaining the same discrepancy bound  and leads to much more delicate arguments. 
\end{remark}

\begin{remark}
Estimate \eqref{eq:f2} of part \eqref{t2} of the proposition actually  holds for each square, without the need to average over rotations. In fact, even more generally, such an estimate holds for any planar convex set, see e.g. Theorem 24 in \cite{BT} as well as the discussion in \S\ref{s:BT}.  Nevertheless, we shall present a simple proof for the square below. 
\end{remark}

\begin{proof}[Proof of Proposition \ref{p:fourier}] We start  with part \eqref{t1}. The beginning of the argument, as well as the notation, closely follows Lemma 7.5 in \cite{Matousek}.   As we already computed  in \eqref{eq:nu0},
\begin{equation}
\label{eqn:indicatorcalc}
  |\widehat{\fatone}_{r,0}(\xi)|^2 =  \frac{\sin^2(2\pi \xi_1 r)\sin^2(2\pi \xi_2 r)}{ {\pi^4} \xi_1^2 \xi_2^2}.
\end{equation}
Note that $|\widehat{\fatone}_{r,\nu}(\xi)|^2 = |\widehat{\fatone}_{r,0}(\widetilde{\xi})|^2$ where $\widetilde{\xi} = (\widetilde{\xi}_1, \widetilde{\xi}_2) = (\xi_1 \cos \nu + \xi_2 \sin \nu, -\xi_1  \sin \nu + \xi_2 \cos \nu).$  For the moment,  we will assume $\xi = (|\xi|, 0)$. In this case, %: this will be important later when we take an average over rotations. By \eqref{eqn:indicatorcalc},
\begin{align*}
|\widehat{\fatone}_{r,\nu}(\xi)|^2 = \left(\frac{\sin ( 2\pi |\xi| r \cos \nu)}{ \pi  |\xi| \cos \nu}\right)^2 \, \left(\frac{\sin ( 2\pi |\xi| r \sin \nu)}{\pi |\xi| \sin \nu}\right)^2.
\end{align*}
Note that for $\nu \in [-\theta,\theta]$,  $|\sin \nu| \approx |\nu|$ and $\cos \nu \approx 1$. Hence, restricting to the interval $(0,\theta)$ due to  symmetry, we can write 
\begin{equation}
\label{eqn:varphiequation}
\varphi_{R,\theta}(\xi) \approx   \frac{2}{R} \int_{R/2}^R \frac{1}{\theta} \int_{0}^{\theta}  \frac{\sin^2( 2\pi |\xi| r \cos \nu)\sin^2(2 \pi |\xi| r \sin \nu)}{|\xi|^4 \nu^2} \hspace{0.1cm} d\nu \hspace{0.1cm} dr
\end{equation}
%\\
%&= \frac{8}{\pi^2} \cdot \frac{2}{R} \int_{R/2}^R \frac{1}{2\theta} \int_{0}^{\theta}  \frac{\sin^2(|\xi| r \cos \nu)\sin^2(|\xi| r \sin \nu)}{|\xi|^4 \nu^2} \hspace{0.1cm} d\nu \hspace{0.1cm} dr
%\end{split}
%\end{align}

Assume  that $\theta > \frac{1}{R|\xi|}$, i.e. $|\xi| > \frac{1}{R\theta}$. Consider the integral in \eqref{eqn:varphiequation} with  the integration in $\nu$ restricted  to the interval from $\frac{1}{R|\xi|}$ to $\theta$:
\begin{align}
\label{eqn:smallintegralestimate}
\begin{split}
\frac{2}{R} \int_{R/2}^R  \frac{1}{\theta} \int_{\frac{1}{R|\xi|}}^{\theta} &  \frac{\sin^2(2\pi |\xi| r \cos \nu) \sin^2(2\pi |\xi|r\sin \nu)}{|\xi|^4 \nu^2}  d\nu dr \lesssim \frac{2}{R} \int_{R/2}^R  \frac{1}{\theta} \int_{\frac{1}{R|\xi|}}^{\theta}  \frac{1}{|\xi|^4 \nu^2} d\nu dr\\
& =  \frac{2}{R} \int_{R/2}^R  \frac{1}{\theta} \Big[-\frac{1}{\theta|\xi|^4} + \frac{R}{|\xi|^3}\Big] dr  \lesssim  \frac{R}{\theta |\xi|^3}.
\end{split}
\end{align}
Now let us consider the integration interval $(0, \frac{1}{R|\xi|})$. On this interval, we have 
%\begin{align}
%\label{eqn:smallestimate1}
$\sin(2\pi r|\xi| \sin \nu) \approx R|\xi| \nu$.
%\end{align} 
In addition, for any $\nu$, we have 
\begin{align}
\label{eqn:smallestimate2}
\frac{2}{R} \int_{R/2}^R \sin^2( 2\pi r|\xi| \cos \nu) dr \approx 1.
\end{align}
The upper bound is clear, while the  lower bound follows from the averaging over $r$. Indeed, upon  rescaling and setting $C= \pi |\xi| R \cos \nu$, one obtains the integral %evaluating the integral the expression above simplifies to  
\begin{equation}
\label{eqn:sinefcn}
 \int_{1}^2 \sin^2( Ct)\, dt = \frac{1}{2} - \frac{1}{2} \int_{1}^2 \cos( 2Ct)\,  dt \ge \frac12 - \frac{1}{C} \ge \frac14, %\frac{1}{2} + \frac{\sin(2\pi R |\xi| \cos \nu) -\sin(4\pi R |\xi| \cos \nu)}{4R \pi |\xi| \cos \nu}.
\end{equation}
if $|\xi| R $ is large enough. \\
%The function $$\frac{\sin(x)-\sin(2x)}{2x}$$ has a global minimum of $-\frac{1}{2}$ at $x = 0$ and is bounded away from both $-\frac{1}{2}$ and $\frac{1}{2}$ outside a small interval around the origin. Thus if $2\pi R|\xi| \cos \nu$ is bounded away from $0$, the expression \eqref{eqn:sinefcn} is also bounded away from $0$. On the interval $(0, \frac{1}{R|\xi|})$, we have $R|\xi| \cos \nu \ge R|\xi| (1-\frac{1}{R|\xi})$, which is bounded away from zero when $R|\xi| \gg 1$. Indeed, $R|\xi| > \frac{1}{\theta} > \frac{4}{\pi} \gg 1$.\\

%By \eqref{eqn:varphiequation}, \eqref{eqn:smallintegralestimate}, \eqref{eqn:smallestimate1}, and \eqref{eqn:smallestimate2}, we thus have
We thus have 
\begin{align*}
  \frac{2}{R\theta } & \int_{R/2}^R   \int_0^{\frac{1}{R|\xi|}}  \frac{\sin^2( 2\pi |\xi| r \cos \nu)\sin^2(2\pi |\xi| r \sin \nu)}{|\xi|^4 \nu^2}  d\nu dr  
   \approx  \frac{2}{R\theta } \int_{R/2}^R   \int_0^{\frac{1}{R|\xi|}}  \frac{  \sin^2(2\pi |\xi| r \cos \nu) R^2 |\xi|^2 \nu^2}{|\xi|^4 \nu^2}  d\nu dr \\
& =  \frac{1}{\theta}  \int_0^{\frac{1}{R|\xi|}}  \frac{R^2 |\xi|^2 \nu^2}{|\xi|^4 \nu^2} \Big(\frac{2}{R} \int_{R/2}^R  \sin^2( |\xi| r \cos \nu) dr \Big)  d\nu \approx  \frac{1}{\theta}  \int_0^{\frac{1}{R|\xi|}}  \frac{R^2}{|\xi|^2}  d\nu  \approx  \frac{R}{\theta |\xi|^3}.
\end{align*}
Hence, putting this together with \eqref{eqn:smallintegralestimate}, we have  
\begin{equation}\label{eq:phi1}
\varphi_{R,\theta}(\xi)  \approx  \frac{R}{\theta |\xi|^3}.
\end{equation} 
This proves  estimate \eqref{eq:f1} in the specific case $\xi = (|\xi|, 0)$. We now show how to remove this assumption. 

Let $\xi = (\xi_1, \xi_2)$ with $\theta_0 = \operatorname{arg} (\xi) = \tan^{-1}(\frac{\xi_2}{\xi_1})  = \alpha \in (0, \frac{\theta}{2})$, and denote $\xi' = (|\xi|,0)$ where $|\xi|$ is the length of $\xi$. Then, obviously,  $\widehat{\fatone}_{r,\nu}(\xi) = \widehat{\fatone}_{r,\nu- \alpha}(\xi')$. Therefore, 
\begin{align*} \frac{1}{\theta} \int_0^{\theta} | \widehat{\fatone}_{r,\nu}(\xi) |^2 d\nu =  \frac{1}{\theta} \int_0^{\theta} | \widehat{\fatone}_{r,\nu- \alpha}(\xi') |^2 d\nu  = \frac{1}{\theta} \int_{-\alpha}^{\theta-\alpha} | \widehat{\fatone}_{r,\nu}(\xi') |^2 d\nu  \ge \frac12 \cdot   \frac{2}{\theta} \int_0^{\theta/2} | \widehat{\fatone}_{r,\nu}(\xi') |^2 d\nu.
\end{align*} 
Hence, $$ \varphi_{R,\theta}(\xi) \gtrsim \varphi_{R,\theta/2}(\xi') \gtrsim \frac{R}{\theta |\xi|^3}$$ when $|\xi | \gtrsim \frac{1}{R\theta}$, which finishes the proof of part \eqref{t1} of Proposition \ref{p:fourier}. \\

We now turn to the proof of part \eqref{t2}. Observe that it is enough to prove the bound \eqref{eq:f2} for one direction, e.g. $\nu =0$, without restrictions on $\operatorname{arg} (\xi)$.  Recall that  
$ | \widehat{\fatone}_{r,0} (\xi) |^2 $ is given by \eqref{eqn:indicatorcalc}. 
If both $|\xi_1|$, $|\xi_2| \gtrsim \frac{1}{R}$, then %  \textcolor{red}{DOUBLE CHECK THIS!}
$$ \frac{2}{R}  \int_{R/2}^R {\sin^2(2\pi \xi_1 r)\sin^2(2\pi \xi_2 r)}  dr \approx 1,$$ which can be shown similarly  to estimate \eqref{eqn:smallestimate2} in the proof of part \eqref{t1}. Therefore, in this case,
%Therefore, for $|\xi| \gtrsim \frac{1}{R}$, we have
\begin{align*}
\frac{2}{R} \int_{R/2}^R  |\widehat{\fatone}_{r,0}(\xi)|^2 \, dr & =  \frac{2}{R}  \int_{R/2}^R \frac{\sin^2(2\pi \xi_1 r)\sin^2(2\pi \xi_2 r)}{ {\pi^4} \xi_1^2 \xi_2^2} dr \gtrsim \frac{1}{\xi_1^2 \xi_2^2} \ge \frac{2}{(\xi_1^2 +\xi_2^2)^2} = \frac{2}{|\xi|^4}. \end{align*}

On the other hand, if  $|\xi_1| \lesssim \frac{1}{R}$, but $|\xi| \ge |\xi_2| \gtrsim \frac{1}{R}$, then  $$ \frac{\sin^2(2\pi \xi_1 r) }{\pi^2  \xi_1^2 } \approx r^2 \approx R^2 ,$$
and once again invoking \eqref{eqn:smallestimate2} we get
\begin{align*}
\frac{2}{R} \int_{R/2}^R  |\widehat{\fatone}_{r,0}(\xi)|^2 \, dr & \approx  \frac{R^2}{\xi_2^2}  \cdot \frac{2}{R}  \int_{R/2}^R {\sin^2(2\pi \xi_2 r)}  dr \approx  \frac{R^2}{ \xi_2^2} \gtrsim \frac{1}{\xi_2^4  } \approx \frac{1}{|\xi|^4}. \end{align*}

\end{proof}

Since in practice we are going to use the decay estimates of Proposition \ref{p:fourier} when $R\approx 1$ is fixed, we state this case as a separate corollary.
\begin{corollary}\label{cor:phir}
Let $\theta \in (0,\pi/4)$ and assume that $R \approx 1$ is fixed, e.g. $R = \frac{1}{16}$. Then
\begin{equation}\label{eq:phir}
\varphi_{R,\theta} (\xi) \gtrsim 
\begin{cases} \frac{1}{\theta |\xi|^{3}} & \textup{ if } -\theta/2 \le \operatorname{arg} (\xi) < \theta/2 \textup{ and } |\xi| \gtrsim \frac1{\theta}, \\ \frac{1}{|\xi |^{4}} & \textup{ if }  |\xi| \gtrsim 1. \end{cases}
\end{equation} 
\end{corollary}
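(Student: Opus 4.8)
The statement is simply the specialization of Proposition \ref{p:fourier} to the regime $R \approx 1$, so the plan is to substitute $R = \tfrac{1}{16}$ (any fixed value of order $1$ works equally well) into the two bounds \eqref{eq:f1} and \eqref{eq:f2} and to read off what the hypotheses on $\xi$ become. First I would recall estimate \eqref{eq:f1}: it asserts $\varphi_{R,\theta}(\xi) \gtrsim \frac{R}{\theta|\xi|^3}$ whenever $\operatorname{arg}(\xi) \in (-\theta/2,\theta/2)$ and $|\xi| \ge \frac{c'}{\theta R}$. With $R$ fixed and absorbed into implicit constants, the factor $\frac{R}{\theta|\xi|^3}$ becomes $\gtrsim \frac{1}{\theta|\xi|^3}$, and the condition $|\xi| \ge \frac{c'}{\theta R}$ becomes $|\xi| \gtrsim \frac{1}{\theta}$; this yields the first line of \eqref{eq:phir}. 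Then I would invoke estimate \eqref{eq:f2}: $\varphi_{R,\theta}(\xi) \gtrsim \frac{1}{|\xi|^4}$ for all $\xi$ with $|\xi| \ge \frac{c''}{R}$, which upon fixing $R$ turns into the condition $|\xi| \gtrsim 1$ and produces the second line.

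There is essentially no mathematical obstacle here — the only thing to be slightly careful about is notational bookkeeping: making sure the implicit constants in $R\approx 1$ are swallowed consistently into the $\gtrsim$ relations, and noting (as in the remark preceding Proposition \ref{p:fourier}) that the symmetry $\varphi_{R,\theta}(\xi) = \varphi_{R,\theta}(-\xi)$ and the evenness of the integrand in $\nu$ let us write the sector condition as the closed-on-one-side interval $-\theta/2 \le \operatorname{arg}(\xi) < \theta/2$ rather than the open interval of the proposition. I would also remark that the two cases are not exclusive: on the overlap region (where $\operatorname{arg}(\xi)$ lies in the small sector and $|\xi| \gtrsim \frac{1}{\theta}$) the first bound $\frac{1}{\theta|\xi|^3}$ is the stronger one, being larger by a factor $\approx \frac{|\xi|}{\theta} \gtrsim \frac{1}{\theta^2} \gg 1$, which is precisely why it is this sectorial bound that drives the discrepancy argument in \S\ref{sec:proof}.

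\begin{proof}[Proof of Corollary \ref{cor:phir}]
Fix $R = \tfrac{1}{16}$ (any constant of order $1$ would do). By part \eqref{t1} of Proposition \ref{p:fourier}, there is an absolute constant $c'>0$ such that
$$
\varphi_{R,\theta}(\xi) \gtrsim \frac{R}{\theta|\xi|^3} \approx \frac{1}{\theta|\xi|^3}
$$
whenever $\operatorname{arg}(\xi) \in (-\theta/2,\theta/2)$ and $|\xi| \ge \frac{c'}{\theta R} = \frac{16 c'}{\theta}$, i.e. whenever $|\xi| \gtrsim \frac1\theta$. Since the integrand defining $\varphi_{R,\theta}$ in \eqref{eq:phi} is even in $\nu$ and $\varphi_{R,\theta}(\xi) = \varphi_{R,\theta}(-\xi)$, the same bound holds on the half-open sector $-\theta/2 \le \operatorname{arg}(\xi) < \theta/2$, which gives the first case of \eqref{eq:phir}.

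Similarly, by part \eqref{t2} of Proposition \ref{p:fourier}, there is an absolute constant $c''>0$ with
$$
\varphi_{R,\theta}(\xi) \gtrsim \frac{1}{|\xi|^4}
$$
for all $\xi \in \mathbb R^2$ with $|\xi| \ge \frac{c''}{R} = 16 c''$, i.e. whenever $|\xi| \gtrsim 1$. This is the second case of \eqref{eq:phir}.
\end{proof}
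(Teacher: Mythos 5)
Your proof is correct and matches the paper's intent exactly: the paper offers no separate argument for Corollary \ref{cor:phir}, treating it as the immediate specialization of Proposition \ref{p:fourier} to fixed $R\approx 1$, which is precisely what you do. The remark about the half-open sector and the overlap of the two cases is a harmless and accurate bit of bookkeeping that the paper leaves implicit.
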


\subsection{The case of all rotations}\label{sec:allrot}
We would like to explain briefly how Fourier estimates akin to \eqref{eq:f1} are used to prove the classical lower bound for the discrepancy with respect to $\mathcal{R}_{[-\frac{\pi}{4},\frac{\pi}{4}]}$, i.e. all rotated rectangles. This would explain the use of the average Fourier decay and demonstrate the difficulties of the restricted case.  The argument is originally due to Beck in \cite{Beck87}.
\begin{thm}
\label{thm:allrotationslowerbound}
For the class of rectangles rotated in all directions $\mathcal{R}_{[-\frac{\pi}{4},\frac{\pi}{4}]}$, we have
\begin{equation}\label{eq:BeckL}
D(N, \mathcal{R}_{[-\frac{\pi}{4},\frac{\pi}{4}]}) \gtrsim N^{1/4}.
\end{equation}
\end{thm}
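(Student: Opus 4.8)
The plan is to follow Beck's Fourier-analytic lower bound argument, using the average decay estimate \eqref{eq:f1} (in its unrestricted form valid for $\theta = \pi/4$, i.e. for all $\xi$ large) as the key geometric input. Let $P$ be an arbitrary $N$-point set in $[0,1]^2$, viewed on the torus, and consider the counting function $F(x) = \sum_{p \in P} \fatone_{r,\nu}(x - p) - N \cdot \vol$ associated to a rotated square of radius $r$ centered at $x$. The discrepancy $D(P, \mathcal{R}_{[-\pi/4,\pi/4]})$ controls $\|F\|_\infty$, hence also the $L^2$ norm of $F$ averaged over all translations $x \in \mathbb{T}^2$, all rotations $\nu \in [-\pi/4, \pi/4]$, and all dilations $r \in [R/2, R]$ with $R \approx 1$ fixed (say $R = \tfrac1{16}$ so that the squares genuinely fit). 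So it suffices to bound this triple average of $\|F\|_2^2$ from below by $\gtrsim N^{1/2}$.

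The second step is to expand via Parseval on the torus. Writing $\widehat{F}(\xi)$ for the Fourier coefficients in $\xi \in \mathbb{Z}^2$, the $x$-average of $\|F\|_2^2$ is $\sum_{\xi \in \mathbb{Z}^2 \setminus\{0\}} |\widehat{\fatone}_{r,\nu}(\xi)|^2 \, |\widehat{\mu_P}(\xi)|^2$, where $\mu_P = \sum_{p\in P}\delta_p$ so that $|\widehat{\mu_P}(\xi)|^2 = \big|\sum_{p} e^{-2\pi i \xi \cdot p}\big|^2$; the zero frequency is killed by the subtraction of the expected value. Averaging this over $\nu$ and $r$ brings in exactly $\varphi_{R,\theta}(\xi)$ with $\theta = \pi/4$, so the quantity to bound below is $\sum_{\xi \neq 0} \varphi_{R,\pi/4}(\xi)\, |\widehat{\mu_P}(\xi)|^2$. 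By the unrestricted version of \eqref{eq:f1}, $\varphi_{R,\pi/4}(\xi) \gtrsim |\xi|^{-3}$ for all $|\xi| \gtrsim 1$, so the sum is $\gtrsim \sum_{1 \lesssim |\xi| \le M} |\xi|^{-3} |\widehat{\mu_P}(\xi)|^2$ for any truncation parameter $M$.

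The third step is the standard pigeonhole/averaging trick to extract $N^{1/2}$. One has $\sum_{\xi \in \mathbb{Z}^2} |\widehat{\mu_P}(\xi)|^2$ over any box is comparable, after a further averaging argument, to the number of lattice-point coincidences; more precisely, using $\sum_{0 < |\xi| \le M} |\widehat{\mu_P}(\xi)|^2 = \sum_{p,p'} \big(\sum_{0<|\xi|\le M} e^{-2\pi i \xi\cdot(p-p')}\big) \gtrsim N M^2 - (\text{diagonal-type error})$ once $M$ is chosen so that the off-diagonal terms are dominated (this is where $M \approx N^{1/2}$ and the condition $N \gtrsim$ const enter). Combining with the weight $|\xi|^{-3} \gtrsim M^{-3}$ on the dyadic shell near $|\xi| \approx M$, or more carefully summing the weighted series shell by shell, gives a lower bound of order $M^{-3} \cdot N M^2 = N/M = N^{1/2}$ when $M \approx N^{1/2}$. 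Hence the averaged $\|F\|_2^2 \gtrsim N^{1/2}$, and therefore $\|F\|_\infty \gtrsim N^{1/4}$, proving \eqref{eq:BeckL}.

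The main obstacle is the pigeonhole step: one must choose the truncation $M$ and handle the off-diagonal sum $\sum_{p \neq p'} D_M(p - p')$, where $D_M$ is the Dirichlet-type kernel on $\mathbb{Z}^2 \cap \{|\xi| \le M\}$, carefully enough that it does not overwhelm the main diagonal term $N M^2$; this typically requires either an additional smooth weight in $\xi$ (replacing the sharp cutoff by a bump adapted to scale $M$, whose Fourier transform decays) or a clever rearrangement, and it is the reason one cannot simply take $M \to \infty$. The passage from the torus/periodic setting back to genuine rectangles in $[0,1]^2$ is handled as in Remark \ref{rem:discr}, and the restriction to squares rather than rectangles only costs a constant since a square is a special rectangle.
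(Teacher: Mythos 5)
Your argument is correct, but it takes a genuinely different route than the paper's. The paper proves \eqref{eq:BeckL} by \emph{amplification}: one starts with the trivial bound \eqref{eqn:trivialbound} at the small scale $R_0 = \frac{1}{2\sqrt{N}}$ (where the square has area $\frac{1}{4N}$, forcing local discrepancy $\ge \frac14$), and then exploits the fact that $\varphi_{R,\pi/4}$ grows essentially linearly in $R$ (cf.\ \eqref{eqn:decayestimates}--\eqref{eq:linear}), so that comparing $R\approx 1$ against $R_0$ picks up a factor $R/R_0 \approx \sqrt{N}$; the key virtue is that one never needs \emph{any} information about $\widehat{\mu}$. You instead fix $R\approx 1$ from the start, skip the rescaling altogether, and lower-bound $\sum_{\xi\ne 0}\varphi_{R,\pi/4}(\xi)\,|\widehat{\mu}(\xi)|^2$ directly by combining $\varphi_{R,\pi/4}(\xi)\gtrsim |\xi|^{-3}\ge M^{-3}$ on $\{1\lesssim|\xi|\le M\}$ with a Montgomery-type bound $\sum_{0<|\xi|\le M}|\widehat{\mu}(\xi)|^2\gtrsim NM^2 - cN^2$, choosing $M\approx N^{1/2}$ so the main term dominates, which yields $\gtrsim M^{-3}\cdot NM^2 \approx N^{1/2}$. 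That is \emph{not} the paper's proof of Theorem~\ref{thm:allrotationslowerbound}, but it is exactly the mechanism the paper employs for Theorem~\ref{thm:restrictedinterval} in \S\ref{sec:proof}, where Lemma~\ref{lem:montgomerylemma} supplies the Montgomery bound and resolves the sharp-cutoff/Fej\'er-kernel issue you correctly flag as the main technical step. The comparison is instructive: the amplification proof is cleaner precisely because it bypasses $\widehat\mu$, but it breaks down for restricted intervals since the good decay of $\varphi_{R,\theta}$ holds only on a sector and one cannot control what fraction of the mass of $\widehat\mu$ sits there; your Montgomery route survives that difficulty, which is exactly why the paper switches to it in the restricted case. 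In effect your proposal is the $\theta = \pi/4$ specialization of the paper's $N^{1/5}$ argument, with the exponent improving to $1/4$ because there is no sector restriction to contend with.
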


\begin{proof}%[Proof of Theorem \ref{thm:allrotationslowerbound}.] 
To simplify matters, we prove the bound for the class $\mathcal{S}_{[-\frac{\pi}{4},\frac{\pi}{4}]}$ of rotated squares; since $\mathcal{S}_{[-\frac{\pi}{4},\frac{\pi}{4}]} \subseteq \mathcal{R}_{[-\frac{\pi}{4},\frac{\pi}{4}]}$, the lower bound also follows for rotated rectangles. The proof makes use of an amplification method which exploits the fact that the $L_2$-discrepancy is a convolution of two functions.

Let $P_N$ be any $N$-point set in $[0,1]^2$. If $S(q,r,\nu) \subset [0,1]^2$ is an axis-parallel square centered at $q$ of side length $2r$ and making angle $\nu$ with the $x$-axis, it is easy to see that the local discrepancy of $P_N$ with respect to $S(q,r,\nu)$ can be expressed as a convolution 
\begin{equation}\label{eq:discrL2}
D(P_N, S(q,r,\nu)) =  \int_{\mathbb{R}^2} \fatone_{r,\nu}(q-p) d\mu(p) = (\fatone_{r,\nu} \ast \mu)(q),
\end{equation}
where $\mu$ is the measure  on $R^2$ given by
$\mu = \sum_{i=1}^N \delta_{p_i} - N\lambda$, with $\lambda$ denoting the restriction of the usual Lebesgue measure on $\mathbb R^2$ to $[0,1]^2$.  Then by Plancherel's theorem, the $L^2$ average of the local discrepancies with respect to shifts $q$ satisfies
\begin{align}
\label{eqn:decomposition}
\|D(P_N, R(q,r,\nu)\|_{L^2(dq)}^2 & = \int_{\mathbb{R}^2} D(P_N, S(q,r,\nu))^2 dq \\ \nonumber & = \int_{\mathbb{R}^2} \widehat{(\fatone_{r} \ast \mu)}(\xi)^2 d\xi = \int_{\mathbb{R}^2} |\widehat{\fatone}_{r,\nu}(\xi)|^2 |\widehat{\mu}(\xi)|^2 d\xi,
\end{align}
i.e. on the Fourier side the ``point component''      $\widehat{\mu}$ and the ``shape component''       $\widehat{\fatone}_{r,\nu}$ are completely separated. 

Now, if $r = \frac{1}{2\sqrt{N}}$, then  the local discrepancy $|D(P_N,S(q,r,\nu))| \ge \frac14$, since ``counting part'' is always integer, while the ``area part'' is $\frac14$. So in this case we have the trivial lower bound \begin{equation}
\label{eqn:trivialbound}
\int_{\mathbb{R}^2} D(P_N,S(q,r,\nu))^2 dq \gtrsim 1.
\end{equation}

The amplification idea is as follows. If $|\widehat{\fatone}_{r,\nu}(\xi)|^2$ would grow linearly in $r$ for any value of $\xi$, then by changing $r = \frac{1}{2\sqrt{N}}$ to $r \approx 1$ we could amplify the trivial lower bound for the square of $L_2$-discrepancy to $\Omega(\sqrt{N})$, thus giving a discrepancy lower bound of $\Omega(N^{1/4})$. This linear growth phenomenon does not  hold for fixed $\nu$ and $r$. Indeed, expression \eqref{eqn:indicatorcalc}   for   $|\widehat{\fatone_{r,0}}(\xi)|^2$  shows that zeros and concentration near the axes create a problem. But this can be fixed by  averaging with respect to dilations and translations, i.e. by considering the quantity $\varphi_{R,\pi/4}(\xi)$.

As mentioned earlier, for {\it{all}} rotations \eqref{eq:f1} holds without any restrictions on $\operatorname{arg} (\xi)$, and even more generally one gets %: indeed, one can take $\xi = (|\xi|,0)$ in the proof  without any loss of generality due to rotational invariance. Moreover, similar arguments show that  
\begin{equation}
\label{eqn:decayestimates}
\varphi_{R,\pi/4}(\xi) \approx \min \Big(R^4, \frac{R}{|\xi|^3}\Big) \text{ for all } R,
\end{equation}
which does satisfy the linear growth alluded to earlier
\begin{equation}\label{eq:linear}
\varphi_{aR, \theta}(\xi) \ge k a \cdot \varphi_{R,\theta}(\xi).
\end{equation}
This   easily yields the discrepancy lower bound \eqref{eq:BeckL} for all rotations.
Thus, letting $R_0 = \frac{1}{2\sqrt{N}}$ and $R \in [R_0, \frac{1}{2}),$ we have
\begin{align*}
\frac{2}{R} \int_{R/2}^R   \frac{2}{\pi} \int_{-\pi/4}^{\pi/4} \|D(P_N,& R(q,r,\nu)\|_{L^2(dq)}^2 \, d\nu dr = \frac{2}{R} \int_{R/2}^R \frac{2}{\pi} \int_{-\pi/4}^{\pi/4}  \int_{\mathbb{R}^2} |\widehat{\fatone}_{r,\nu}(\xi)|^2 |\widehat{\mu}(\xi)|^2 d\xi d \nu d r \\ 
&  = \int_{\mathbb{R}^2} |\widehat{\mu}(\xi)|^2 \varphi_{R,2\pi}(\xi) d\xi \ge  k \frac{R}{R_0} \int_{\mathbb{R}^2} |\widehat{\mu}(\xi)|^2 \varphi_{R_0,\pi}(\xi) d\xi
\gtrsim R \sqrt{N},
\end{align*}
where the last inequality follows from the trivial discrepancy bound \eqref{eqn:trivialbound}. %, which gives that the integral is bounded away from $0$ for our choice of $R_0$.
\end{proof}

The fact that \eqref{eqn:decayestimates} or, more preceisely,  \eqref{eq:f1} holds only on a sector for restricted intervals of directions prevents the amplification argument above from working in this setting. In the following section we shall at least partially overcome this difficulty.

\section{Restricted intervals of rotations}
\label{sec:restrictedintervals}
We turn to the case of a restricted interval of rotations. We prove Theorem \ref{thm:restrictedinterval} inspired by techniques from a recent paper of Brandolini and Travaglini in \cite[2022]{BT}.

%It is not known whether $N^{1/5}$ is the best order lower bound in this case or whether it can be improved to $N^{1/4}$ as in the case of all rotations. 
As is shown in the previous section, %the main amplification tool in the proof for all rotations does not seem to hold on all of $\mathbb{R}^2$ once we average over a smaller interval of rotations, as 
the favorable decay estimates for $\varphi_{R, \theta}$ only hold on a sector of Fourier space. Thus, if we wanted to prove an analogue of Theorem \ref{thm:allrotationslowerbound} for the restricted interval of rotations $[-\theta,\theta]$ using the same amplification technique, we would need to integrate the analogue of \eqref{eqn:decomposition} on the sector $S_{\theta} = \{\xi = (\xi_1, \xi_2): \tan^{-1}(\frac{\xi_2}{\xi_1}) \in (-\frac{\theta}{2},\frac{\theta}{2})\}$  rather than all of $\mathbb{R}^2$, i.e. consider the expression 
\begin{align}
\label{eqn:sectorintegral}
\int_{S_{\theta}} |\widehat{\fatone}_{r,\nu}(\xi)|^2 |\widehat{\mu}(\xi)|^2 d\xi
\end{align}
But there is a problem: in order to show that the $L_2$ discrepancy is of roughly the same order as the above expression, we would need to show that the point component $|\widehat{\mu}(\xi)|^2$ carries at least a constant fraction of its mass on that sector. (Note that in the proof for all rotations, it was not necessary to know anything about the behavior of  $|\widehat{\mu}(\xi)|^2$.) Since $\mu$ is the sum of $N$ dirac masses at each point in the point set $P_N$, the behavior of $\widehat{\mu}$ varies wildly depending on $P_N$. It is therefore difficult to conclude anything about the relationship between the $L_2$ discrepancy and  \eqref{eqn:sectorintegral}.

\begin{remark}
\label{rmk:decayestimates}
The decay estimates for $\varphi_{R, \theta}$ in the proof of the amplification lemma match those for a disc, for $\xi$ large enough. Classes of discs and classes of all rotated rectangles tend to behave very similarly in discrepancy settings because of their shared rotational invariance and Fourier transform properties. In the restricted setting, the decay estimates agree with those for a disc only on a restricted sector of Fourier space.
\end{remark}

\begin{remark}
\label{rmk:stefan}
Understanding the behavior of $|\widehat{\mu}(\xi)|^2$ is of interest in other problems. For example, in \cite{Steinerberger} it is shown that one of the first $4N$ Fourier frequencies of the measure $\mu$ on the torus is not $0$: in fact, this holds with the same constant $4$ holds on all manifolds, not just the torus. On the torus, one can prove this directly using the following result of Montgomery \cite{Montgomery}: let $p_1, \cdots, p_N$ be $N$ points in $\mathbb{T}^2$. Then 
$$\sum_{\substack{|k_1| \le X_1 \\ |k_2| \le X_2 \\ (k_1,k_2) \neq  (0,0)}} \Big|\sum_{j=1}^N e^{2\pi i m \cdot p_j}\Big|^2 \ge NX_1X_2 - N^2$$
for any positive numbers $X_1, X_2$. The proof is quick and uses the Fej\'er kernel to lower-bound the left hand side. One then wonders if other kernels could be employed to obtain information about $\widehat{\mu}$ on sectors or annuli. Not much progress has yet been made in this direction.
\end{remark}

Fortunately, using techniques based  \cite{BT}, we can avoid using the amplification technique entirely and obtain a lower bound of $\Omega(N^{1/5})$ for the directional discrepancy on any restricted interval of rotations. First let us briefly summarize  the results of \cite{BT} which deal with a  different, but somewhat related notion of discrepancy.  %over translations and dilations (but not rotations!) of fairly general convex bodies.  %their intriguing results, which generalize discrepancy bounds for wider classes of convex bodies.

\subsection{Discrepancy with respect to rotations and dilations of a convex body}\label{s:BT}

The paper \cite{BT} studies the problem of  discrepancy with respect to  translations and dilations (but not rotations!) of a given planar convex body with minimal smoothness and curvature assumptions (or in absence thereof). The authors use the following regularity measure:

\begin{definition}  Let $C \subset \mathbb{T}^2$
be a convex body. For every unit vector $\Theta = (\cos \theta, \sin\theta)$ and $\delta > 0$, let
$$\gamma_{\Theta}(\delta) = \Big\{x \in C: x \cdot \Theta = \inf_{y \in C} (y \cdot \Theta) + \delta\Big\},$$
in other words $\gamma_{\Theta}(\delta)$ is the chord in $C$ perpendicular to the vector $\Theta$ and a distance $\delta$ from $\partial C$. Denote by $|\gamma_{\Theta}(\delta)|$  its length. This quantity measures smoothness and convexity of the boundary $\partial C$ in the direction $\Theta$. 
\end{definition}

It is shown in \cite{BT} via a simple geometric argument that for any convex body $C$ there exists a $\delta_0$ and $c > 0$ such that for $0 < \delta < \delta_0$ and every direction $\Theta$, we have
\begin{align}
\label{eqn:gammaquantity}
 |\gamma_{\Theta}(\delta)| \ge c \delta.
 \end{align}
 This observation together with Theorem 24 in \cite{BT} provides an alternative way to prove and generalize part \eqref{t2} of Proposition \ref{p:fourier}.  
 
 We also note that if $C$ has $C^2$ boundary (e.g., a disc), then again there is a $\delta_0$ and $c> 0$ such that for $0 < \delta < \delta_0$ and each $\Theta$,
 \begin{align}
\label{eqn:gammaquantitydisc}
 |\gamma_{\Theta}(\delta)| \ge c\delta^{1/2}.
 \end{align}
The main result of \cite{BT} on the discrepancy bound is stated below. Essentially, it states that if the boundary of a convex body $C$ behaves like a disc on some interval of directions (in the sense that $\gamma_{\theta}(\delta)$ grows at least like $\delta^{1/2}$), and if we know the behavior of $\gamma_{\theta}(\delta)$ elsewhere on the boundary, we can obtain a lower bound on the discrepancy with respect to the class of all translations and dilations of $C$.

\begin{thm}[\cite{BT}, Theorem 7]
\label{thm:convexbody}
Let $C$ be a convex body and let $\Theta = (\cos \omega, \sin \omega)$. Assume there are constants $\delta_0, c_1, c_2 > 0$, $1/2 \le \sigma \le 1$ and an interval $\Omega$ in $(-\pi, \pi)$ such that for every $0 < \delta \le \delta_0$ we have

\begin{center}
$\begin{cases}
|\gamma_{-\Theta}(\delta)|  + |\gamma_{\Theta}(\delta)| > c_1 \delta^{1/2},
&  \omega \in \Omega \\
|\gamma_{-\Theta}(\delta)|  + |\gamma_{\Theta}(\delta)| > c_2 \delta^{\sigma}
& \omega \notin \Omega.
\end{cases}$
\end{center}
Then there exists $c > 0$ such that for every set $P_N$ of $N$ points in $\mathbb{T}^2$ we have
$$D_2(P_N, \mathcal{T}(C)) \ge cN^{2/(2\sigma + 3)},$$
where $\mathcal{T}(C) = \{\lambda C + t: 0 \le \lambda \le 1, t \in \mathbb{T}^2\}$.
\end{thm}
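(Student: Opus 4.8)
The plan is to prove Theorem \ref{thm:convexbody} by the Fourier-analytic method, in the same spirit as the proof of Theorem \ref{thm:allrotationslowerbound} above, but now exploiting the curvature hypothesis encoded in $\gamma_\Theta(\delta)$ rather than rotational averaging. As in \eqref{eq:discrL2}--\eqref{eqn:decomposition}, for the body $\lambda C + t$ the local discrepancy of $P_N$ is the convolution $(\fatone_{\lambda C}\ast\mu)(t)$, where $\mu = \sum_j \delta_{p_j} - N\lambda_{\mathbb T^2}$, and Plancherel gives
\begin{equation*}
\int_{\mathbb T^2} D(P_N, \lambda C + t)^2\, dt = \sum_{k \in \mathbb Z^2\setminus\{0\}} \lambda^4 |\widehat{\fatone_C}(\lambda k)|^2\, |\widehat\mu(k)|^2.
\end{equation*}
First I would average this over dilations $\lambda$ in a dyadic range, producing a quantity analogous to $\varphi_{R,\theta}$: an averaged squared Fourier transform $\Phi_R(k) = \frac{1}{R}\int_{R/2}^R \lambda^4 |\widehat{\fatone_C}(\lambda k)|^2\, d\lambda$. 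The whole game is to get a good pointwise lower bound on $\Phi_R(k)$ in terms of $|k|$ using the chord-length hypothesis.

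The key step is the Fourier-decay input. For a convex body, $\widehat{\fatone_C}(\xi)$ in the direction $\Theta = \xi/|\xi|$ is controlled — via integration by parts / van der Corput-type arguments along chords perpendicular to $\Theta$ — by the quantities $|\gamma_{\pm\Theta}(\delta)|$ evaluated at scale $\delta \approx 1/|\xi|$; this is precisely the content of "Theorem 24 in \cite{BT}" referenced in the excerpt, which I would invoke as a black box. Concretely, if $|\gamma_{-\Theta}(\delta)| + |\gamma_{\Theta}(\delta)| \gtrsim \delta^s$ then one gets an averaged lower bound of the shape $\Phi_R(k) \gtrsim |k|^{-3-s}$ on the relevant frequency range (matching \eqref{eqn:decayestimates}, where $s = 1/2$ gives the $|k|^{-7/2}$, i.e. $R/|k|^3$ after accounting for the $\lambda$-powers, behavior of a disc). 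So under the hypotheses of the theorem one has $\Phi_R(k) \gtrsim |k|^{-7/2}$ for $k$ with $\arg k \in \Omega$, and $\Phi_R(k) \gtrsim |k|^{-3-\sigma}$ for all other $k$ (with $|k|$ in the appropriate range dictated by $R$).

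Next I would run the amplification exactly as in \S\ref{sec:allrot}: start from the trivial bound — choosing $\lambda \approx N^{-1/2}$ so that the area defect forces $|D(P_N,\lambda C+t)| \gtrsim 1$ and hence $\int D^2\, dt \gtrsim 1$ — and then slide the dilation parameter up to $R \approx 1$. The linear-growth property $\Phi_{aR}(k) \gtrsim a\,\Phi_R(k)$ (which follows from the homogeneity of the decay estimate, just as in \eqref{eq:linear}) lets one transfer the trivial bound, but the precise power of $N$ one obtains is dictated by how the worst decay exponent interacts with the dimension of the frequency lattice. Here is where the two-regime hypothesis matters: on the sector $\arg k \in \Omega$ the decay is the good $|k|^{-7/2}$, but this is only a one-parameter family of frequencies (a sector has "dimension drop" relative to annuli), whereas off the sector we only have the weaker $|k|^{-3-\sigma}$. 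Balancing the contribution of the sector (good decay, few frequencies) against the full annulus (weaker decay, many frequencies) — i.e. optimizing over the cutoff frequency $|k| \approx T$ that separates "use the trivial bound for small frequencies" from "use decay for large frequencies" — is what produces the exponent $2/(2\sigma+3)$; one checks that at the optimal $T \approx N^{1/(2\sigma+3)}$ both pieces are comparable. I expect this balancing, together with carefully verifying that the frequency ranges on which the decay estimates are valid are compatible with the dilation range $[\,N^{-1/2}, 1]$, to be the main technical obstacle; the Fourier decay estimate itself I would quote from \cite{BT} rather than reprove.
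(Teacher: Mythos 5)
First, a contextual caveat: the paper does not actually prove Theorem~\ref{thm:convexbody}; it quotes it from \cite{BT}. What can be compared is your proposed strategy against the method of \cite{BT} as reproduced in this paper's own proof of Theorem~\ref{thm:restrictedinterval} in \S\ref{sec:proof}, which the authors explicitly describe as ``based on and inspired by'' the argument behind the statement at hand.

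Your opening steps are sound: the convolution identity, Plancherel, and averaging over dilations to form $\Phi_R(k)$, with Theorem~24 of \cite{BT} supplying the pointwise lower bounds on the averaged Fourier transform in terms of $|\gamma_{\pm\Theta}(\delta)|$, are all correct and are in fact used. The gap is in the second half. You propose to run Beck's amplification (trivial bound at $\lambda \approx N^{-1/2}$, then slide $\lambda$ up using linear growth of $\Phi_R$). This works in \S\ref{sec:allrot} precisely because the favorable decay $\varphi_{R,\pi/4}(\xi)\gtrsim R|\xi|^{-3}$ holds for \emph{all} $\xi$, so the trivial bound $\sum_m|\widehat\mu(m)|^2\varphi_{R_0}(m)\gtrsim 1$ can be amplified uniformly without ever needing to know where $|\widehat\mu|^2$ concentrates. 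In the two-regime setting of Theorem~\ref{thm:convexbody}, the growth rate of $\Phi_R(k)$ in $R$ is \emph{direction-dependent}: in the good directions ($\omega\in\Omega$) it is linear, but off the sector (say for $\sigma=1$, i.e.\ a flat piece of boundary) $\Phi_R(k)$ is essentially constant in $R$, so amplification gives nothing there. Since the trivial bound tells you nothing about how $|\widehat\mu|^2$ is distributed between the sector and its complement, you cannot conclude any amplified bound. The paper makes exactly this point in the discussion following \eqref{eqn:sectorintegral}: ``we would need to show that the point component $|\widehat\mu(\xi)|^2$ carries at least a constant fraction of its mass on that sector\dots It is therefore difficult to conclude anything.'' Your proposal does not address this, and the vague ``optimize over a cutoff frequency $T$'' does not supply the missing information. (Also, a sector in $\mathbb{Z}^2$ does not suffer a ``dimension drop''; it is a two-dimensional cone, so the heuristic you invoke there is not right.)

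The tool that replaces amplification, and which your proposal omits entirely, is Montgomery's lemma (Lemma~25 of \cite{BT}, reproduced here as Lemma~\ref{lem:montgomerylemma}). It gives a direct lower bound
$$\sum_{m\in (U\setminus B)\cap\mathbb Z^2}\Big|\sum_{j=1}^N e^{2\pi i m\cdot p_j}\Big|^2 \ge \tfrac14 N\cdot\mathrm{area}(U)-cN^2$$
for any convex symmetric $U$, which is exactly the control on the point component that the amplification route lacks. The \cite{BT} argument (mirrored in \S\ref{sec:proof}) covers the good sector in Fourier space by a family of rotated rectangles $R_j$ of dimensions $X\times Y$ with $XY\approx N$, dominates $\rho\sum_j\fatone_{R_j}(m)$ by $\Phi_R(m)$ using the two decay regimes, and then applies Lemma~\ref{lem:montgomerylemma} to each $R_j$. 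The exponent $2/(2\sigma+3)$ emerges from optimizing the aspect ratio $X/Y$ of these rectangles against the size of the admissible weight $\rho$, not from a cutoff frequency in an amplification scheme. So: your Fourier-decay input is the right black box to quote, but the core of the argument you sketch is the one the paper explicitly says fails; the missing idea is Montgomery's lemma and the rectangle-tiling of the sector.
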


Due to \eqref{eqn:gammaquantity}, Theorem \ref{thm:convexbody} automatically gives, for a large class of convex bodies $C$, a discrepancy lower bound of order at least $N^{1/5}$ with respect to the class $\mathcal{T}(C)$. In this setting of discrepancy with respect to classes of convex bodies, we can think of the ``rotations'' component as being encoded in the smoothness of the boundary $\partial C$ on the interval $\Omega$. Thus, this setting is at least heuristically similar to the class of all translations, dilations and rotations (in the interval $\Omega$) of a rectangle $R$. 

\subsection{Proof of Theorem \ref{thm:restrictedinterval}}\label{sec:proof}

The proof of Theorem \ref{thm:convexbody} exploits the decomposition \eqref{eqn:decomposition} of the $L_2$ discrepancy that was used in the proof of Theorem \ref{thm:allrotationslowerbound}. We also make use of  an analogue this decomposition in our proof of Theorem \ref{thm:restrictedinterval}. 

\begin{remark}
\label{rmk:transfertotorus}
As described in Remark \ref{rem:discr}, in the proof of Theorem \ref{thm:restrictedinterval}, similarly to Theorem \ref{thm:convexbody}, we deal with the periodic version of discrepancy, i.e.  we transfer the problem to the torus $\mathbb T^2$ and the discrete Fourier space $\mathbb Z^2$. Obviously, the Fourier coefficients of indicators of squares are equal to their Fourier transforms evaluated at integer points. Hence, decay estimates given in Corollary \ref{cor:phir} continue to hold in this case. % hold in the discrete space.
\end{remark}

The following fact, based on a classical  lemma of Montgomery  \cite{Montgomery}, provides the lower bound %allows us to lower bound
for  the ``point component''  in the proof . %It is based on similar results of J.W.S. Cassels and H. Montgomery in \cite{Montgomery}. In \cite{BT}, the following variant is proved using a Fourier analytic method.
\begin{lemma}[\cite{BT}, Lemma 25]
\label{lem:montgomerylemma}
Let $B$ be a neighborhood of the origin. Then there is a positive
constant $c$ such that for every convex symmetric body $U$ in $\mathbb{R}^2$ and every finite set $\{p_j\}_{j=1}^N \subset \mathbb{T}^2$ we have
$$\sum_{m \in (U \setminus B) \cap \mathbb{Z}^2} \Big|\sum_{j=1}^N e^{2\pi i m \cdot p_j}\Big|^2 \ge N\cdot \emph{\area} (U)/4 - cN^2.$$
\end{lemma}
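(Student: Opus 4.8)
The plan is to lower-bound the sum directly via the Fejér kernel, following Montgomery's classical argument. The key observation is that for any convex symmetric body $U$ there is a nonnegative trigonometric polynomial $K$ supported on $U \cap \mathbb{Z}^2$ (in frequency) that dominates the indicator of $B$ near the origin and has controlled $L^1$ norm and a large value of $\widehat K(0)$. Concretely, I would first handle the case $U = [-X_1,X_1]\times[-X_2,X_2]$ a box: take $K$ to be a product of one-dimensional Fejér kernels scaled so that its frequency support lies in $U$, so that $\widehat K(0) \approx \area(U)$ while $\|K\|_{L^1(\mathbb{T}^2)} = \widehat K(0)$ and $K \ge 0$ pointwise. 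Expanding $\int_{\mathbb{T}^2} K(x) \big|\sum_j e^{2\pi i x\cdot p_j}\big|^2\,dx$ on the Fourier side gives $\sum_{m \in U\cap\mathbb{Z}^2} \widehat K(m) \big|\sum_j e^{2\pi i m\cdot p_j}\big|^2$, and evaluating it on the physical side gives $\ge N \widehat K(0) - \text{(cross terms)}$, where the $j\ne k$ cross terms are bounded in absolute value by $N^2 \max_{x\ne 0}|K(x)|$-type quantities or, more carefully, by $N^2$ times the sup of $K$ away from a neighborhood of the origin.

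Next I would isolate the contribution of the excluded ball $B$. The frequencies $m \in B \cap \mathbb{Z}^2$ contribute at most $|B\cap\mathbb{Z}^2| \cdot N^2 \cdot \max_m \widehat K(m) \lesssim N^2$ (since $B$ is fixed and $\widehat K$ is bounded), so removing them from the sum costs only $O(N^2)$, which can be absorbed into the $cN^2$ term. Putting these together: the full sum over $m \in U\cap\mathbb{Z}^2$ is at least $N\widehat K(0) - c_1 N^2 \ge N\area(U)/4 - c_1 N^2$ once $\widehat K(0) \ge \area(U)/2$, say — the factor $1/4$ in the statement leaves room for the loss incurred in making the Fejér-kernel frequency support fit inside $U$. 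Subtracting the $B$-contribution changes $c_1$ to some larger $c$. For a general convex symmetric $U$ (not a box), I would inscribe a box of comparable area — by John's theorem or an elementary argument, any symmetric convex body in $\mathbb{R}^2$ contains an axis-parallel rectangle of area $\gtrsim \area(U)$ — and apply the box case to that rectangle, adjusting the constant in front of $\area(U)$; alternatively one builds the dominating kernel directly adapted to $U$.

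The main obstacle I anticipate is the bookkeeping of constants so that the clean factor $1/4$ comes out: one must choose the Fejér kernel's dimensions and the inscribed box carefully so that the product of (fraction of area captured by the box) $\times$ (fraction of $\widehat K(0)$ retained after the support-fitting loss) is at least $1/4$, while simultaneously controlling the off-diagonal sum and the $B$-truncation error into a single $cN^2$. None of these steps is deep — each is a one- or two-line estimate — but threading the constant correctly, and making sure $c$ depends only on $B$ and not on $U$ or $N$ or $\{p_j\}$, is where care is needed. Since this lemma is quoted verbatim from \cite{BT}, I would in practice simply cite it; the sketch above is the route I would take to reprove it.
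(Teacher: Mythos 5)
The paper itself does not prove this lemma --- it quotes it verbatim from \cite{BT}, so citing it, as you say you would do in practice, is exactly what is done. Your Fej\'er-kernel sketch is the right family of ideas, but it contains one step that genuinely fails: the reduction of a general convex symmetric body $U$ to an inscribed \emph{axis-parallel} box of comparable area. No such box need exist. Take $U$ to be a thin rectangle of length $L$ and width $\epsilon$ rotated by $45^\circ$: an axis-parallel rectangle $[-a,a]\times[-b,b]$ contained in $U$ must have all four corners $(\pm a,\pm b)$ in $U$, which forces $a+b\lesssim \epsilon$ and hence area $4ab\lesssim \epsilon^2\ll \epsilon L\approx \area(U)$. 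John's theorem gives a comparable inscribed \emph{rotated} rectangle, but that does not help, since the product-of-Fej\'er-kernels construction requires the frequency support to be an axis-parallel box in $\mathbb{Z}^2$. This is not a corner case here: the bodies $U=R_j$ to which the lemma is applied in the proof of Theorem \ref{thm:restrictedinterval} are precisely long thin rectangles rotated by the angles $j\psi$, so your reduction would lose essentially all of $\area(R_j)$ and the final bound would collapse.

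The standard fix --- and the source of the clean factor $1/4$ --- is the alternative you mention only in passing: build the kernel directly from $U$. Set $\widehat{K}(m)=\area(U/2)^{-1}\bigl(\fatone_{U/2}*\fatone_{U/2}\bigr)(m)$ for $m\in\mathbb{Z}^2$; this is supported in $U$, satisfies $0\le \widehat{K}(m)\le 1$, and the trigonometric polynomial $K(x)=\sum_m \widehat{K}(m)e^{2\pi i m\cdot x}$ is nonnegative by Poisson summation (it is the periodization of $\area(U/2)^{-1}\,|\widehat{\fatone_{U/2}}|^2\ge 0$), with $K(0)\ge \area(U/2)=\area(U)/4$. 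Then $\sum_m \widehat{K}(m)\bigl|\sum_j e^{2\pi i m\cdot p_j}\bigr|^2=\sum_{j,l}K(p_j-p_l)\ge N\,K(0)\ge N\cdot\area(U)/4$ by positivity (the off-diagonal terms are simply dropped, not estimated), and discarding the frequencies in $B$ costs at most $|B\cap\mathbb{Z}^2|\cdot N^2=cN^2$, with $c$ depending only on $B$. Two smaller corrections to your write-up: the identity you want is $\sum_m\widehat{K}(m)|\widehat{\mu}(m)|^2=\sum_{j,l}K(p_j-p_l)$, evaluating $K$ at point differences, rather than $\int_{\mathbb{T}^2}K(x)\bigl|\sum_j e^{2\pi i x\cdot p_j}\bigr|^2\,dx$; and the $1/4$ is not constant bookkeeping slack but exactly $\area(U/2)/\area(U)$ in dimension two.
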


We can now proceed with the proof of the main theorem for discrepancy on restricted intervals.

\begin{proof}[Proof of Theorem \ref{thm:restrictedinterval}]
Let $P = \{p_1, \cdots, p_N\}$ be an $N$-point set in $[0,1]^2$, %Let $\Omega^c$ be the complement in $(-\frac{\pi}{2},\frac{\pi}{2})$ of the restricted interval $\Omega$, and denote its length by $|\Omega^c|$. 
%As in the theorem statement, assume our interval of rotations is
and let  $\Omega = [-\theta, \theta]$ with $\theta <\pi/4$ be the interval of allowed directions. 

Consider  the axis-parallel rectangle $R_0$ with vertices $(\pm \frac{X}{2}, \pm \frac{Y}{2})$, satisfying the conditions 

\begin{enumerate}
\item \label{cond:1} $XY =  \kappa N$ for some constant $\kappa$,
\item \label{cond:2} $\theta X \gtrsim Y$, and
\item \label{cond:3} $Y \gtrsim \frac{1}{\theta}$.
\end{enumerate}

Furthermore, let $$\psi = \frac{Y}{X} \hspace{0.5cm} \text{ and } \hspace{0.5cm} M = M_{\theta} = \left[ \frac{\theta}{2 \psi}\right] = \left[\frac{\theta}{2} \frac{X}{Y}\right].$$
For every $-M \le j \le M$ we consider (in the Fourier space) the rotated rectangles $R_j = r_{j\psi}R_0$, where $r_{j\psi}$ is the rotation by angle $j\psi$ about the origin. The union $\bigcup_{j=-M}^M R_j$ then approximates the sector in the plane between angles $-\theta/2$ and $\theta/2$, as depicted in Figure \ref{fig:rotatedrects} (recall that this is the sector on which $\varphi_{R,\theta} (\xi)$ satisfies a better Fourier decay estimate, see Corollary \ref{cor:phir}).

\begin{figure}[H]
\centering
\includegraphics[scale=0.25]{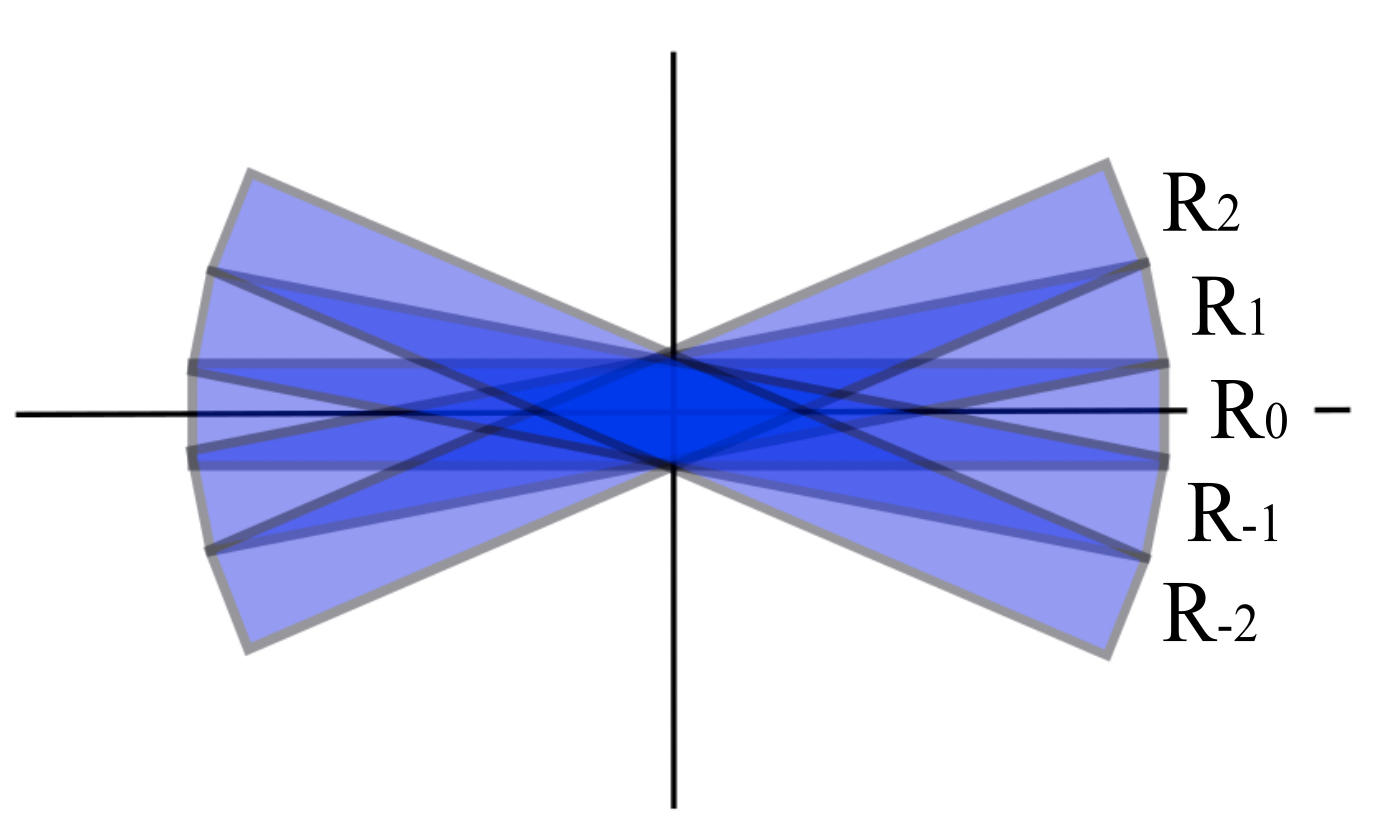}
\caption{The approximating rectangles in Fourier space when $M = 2$.}
\label{fig:rotatedrects}
\end{figure}

For every $m = (m_1, m_2) \in \mathbb{Z}^2$, let $$\Phi(m) = \sum_{j=-M}^M \fatone_{R_j}(m),$$
so that $\Phi(m)$ keeps track of how many rectangles the lattice point $m$ lies in. A simple counting argument  %as in Theorem 7 of \cite{BT} 
shows that if $|m| \ge Y$ and $m \in \bigcup_{j=-M}^{M} R_j$,
\begin{align}
\label{eqn:phibiggerY}
\Phi(m) \lesssim \frac{Y}{|m| \psi } \approx \frac{X}{|m|},
\end{align}
and if $|m| \le Y$, we can crudely bound $\Phi(m)$ by the total number of rectangles $2M+1$, so
\begin{align}
\label{eqn:philessY}
\Phi(m) \lesssim \frac{\theta X}{Y},
\end{align}
where we have used the fact that that $2M + 1 \approx \theta \frac{X}{Y}$ since $M \approx \theta \frac{X}{Y} \gtrsim 1$ by the assumption that $\theta X \gtrsim Y$.\\
%(In the above $c$ is some constant independent of $\theta$). 

Our  goal now  is to find a constant  $\rho$ (depending on $N$ and $\theta$) so that 
\begin{equation}
\rho \Phi(m) \le \varphi_{R,\theta} (m)
\end{equation}
with some fixed $R \approx 1$, say, $R=\frac1{16}$ for $|m|\gtrsim 1$.  According to Corollary \ref{cor:phir}, it would suffice to have 
\begin{align}
\label{eqn:rhophidecay}
\rho \Phi(m) \lesssim \begin{cases} \theta^{-1}|m|^{-3} & \textup{ if } -\theta/2 \le \text{arg}(m) < \theta/2 \textup{ and } |m| \gtrsim \theta^{-1}  \\ |m|^{-4} & \textup{ otherwise.}  \end{cases}
\end{align}

\bigskip

When $m \in \cup_j R_j$ satisfies $|m| \ge Y \gtrsim \frac{1}{\theta}$, it would suffice to take  
\begin{align}
\label{eqn:rhoestimate1}
\rho \lesssim \frac{1}{\theta X^3}.
\end{align}
Indeed, in this case, since $m \in \cup_j R_j$ implies that $|M| \le X$, we see that 
%From 
$$\rho \cdot c\frac{X}{|m|} \lesssim \theta^{-1} X^{-2} |m|^{-1} \lesssim \theta^{-1} |m|^{-3},$$
which matches \eqref{eqn:phibiggerY}. \\
%(when ) and \eqref{eqn:rhophidecay}, we want

%(Note the decay estimate holds in this region by Proposition \ref{prop:convexfourierlowerbound}, which stipulates that $k_{\theta} \approx \frac{1}{\theta}$, and and Condition \ref{cond:3} that $Y \gg \frac{1}{\theta}$.) Taking  $\inf\{m: |m| \le X\}$ (since $-\theta \le \text{arg}(m) < \theta$ implies $m \in \bigcup_{j=-M_{\theta}}^{M_{\theta}}$, which means $|m| \le X$ always), the following upper bound for $\rho$ suffices:

When $|m| \le Y$, one could take $\rho$ to satisfy 
\begin{align}
\label{eqn:rhoestimate2}
\rho \lesssim \frac{1}{\theta X  Y^3}.
\end{align}
In this case one obtains
$$\rho \cdot c\frac{\theta X}{Y} \lesssim Y^{-4} \lesssim |m|^{-4}.$$
in agreement with  \eqref{eqn:philessY}

%And from \eqref{eqn:philessY} (when $|m| \le Y$) and \eqref{eqn:rhophidecay}, we want
%$$\rho \cdot c\frac{\theta X}{Y} \lesssim |m|^{-4}.$$
%Taking $\inf\{m: |m| \le Y\}$ since $|m| \le Y$ in this region, the following upper bound for $\rho$ suffices:

Combining \eqref{eqn:rhoestimate1} and \eqref{eqn:rhoestimate2}, we choose
\begin{align}
\label{eqn:finalrhoestimate}
\rho = c \min\Big(\frac{1}{\theta X^3}, \frac{1}{\theta XY^3}\Big),
\end{align}
again for $c$ independent of $\theta$. \\% (we have abused notation somewhat with the constant $c$).

We are now ready to choose the parameters which would satisfy conditions \eqref{cond:1}--\eqref{cond:3}. 
Since $XY = \kappa N$ we obtain $X \approx \kappa^{3/5} N^{3/5}$ and $Y = \kappa^{2/5} N^{2/5}$ by setting equal the arguments of the min function in \eqref{eqn:finalrhoestimate}. Therefore, $\rho \approx \theta^{-1} \kappa^{-9/5} N^{-9/5}$.

Condition \eqref{cond:2} that $\theta \frac{X}{Y} \gtrsim  1$ then requires that  $\theta N^{1/5} \gtrsim 1$, so the remainder of the proof holds under the assumption that  $N \gtrsim  \theta^{-5}$. Note that then condition \eqref{cond:3} holds since  $Y \approx N^{2/5} \gtrsim \frac{1}{\theta^2} \gtrsim \frac{1}{\theta}.$

By the above discussion, for any lattice point $m$ with $|m|$ large enough (greater than an absolute constant), we have 
\begin{equation}
\label{eqn:sumofrectslower}
\varphi_{R,\theta} (m) =\frac{2}{R} \int_{R/2}^{R} \frac{1}{|\Omega|} \int_{\Omega} \int_{\mathbb{R}^2} |\widehat{\fatone}_{r,\nu}(m) |^2 d\nu dr \ge \rho \Phi(m) =  \rho  \sum_{j=-M}^M \fatone_{R_j}(m).
\end{equation}

We proceed in a fashion similar to the proof of Theorem \ref{thm:allrotationslowerbound}. Defining  the discrepancy measure $\mu  = \sum_{i=1}^N \delta_{p_i} - N\lambda$ where $\lambda$ is the Lebesgue measure on the torus $\mathbb T^2$,  considering an analog of \eqref{eqn:decomposition} for the $L^2$ discrepancy with respect to translations, and averaging over dilations and rotations, one obtains 
\begin{align}
\begin{split}
\label{eqn:discrep_calc_restricted}
\frac{2}{R} \int_{R/2}^R \frac{1}{\Omega} \int_{\Omega}  \|D(P_N,R(q,r,\nu)\|_{L^2(dq)} ^2 d\nu dr & = \frac{2}{R} \int_{R/2}^{R} \frac{1}{|\Omega|} \int_{\Omega} \sum_{m \neq 0} |\widehat{\fatone}_{r,\nu}(m) |^2 |\widehat{\mu}(m)|^2 d\nu dr\\
&= \sum_{m \neq 0} \Big|\sum_{j=1}^N e^{2\pi i m p_j} \Big| \frac{2}{R} \int_{R/2}^R \frac{1}{|\Omega|} \int_{\Omega} |\widehat{\fatone}_{r,\nu}(m)|^2 d\nu dr\\
&= \sum_{m \neq 0} \Big|\sum_{j=1}^N e^{2\pi i m p_j} \Big| \cdot  \varphi_{R,\theta} (m) \\
&\ge  \sum_{|m| \ge K} \Big|\sum_{j=1}^N e^{2\pi i m p_j}\Big|^2 \rho \sum_{j=-M}^M \fatone_{R_j}(m)  \hspace{0.5cm}  \text{ (by \ref{eqn:sumofrectslower})}\\
&= \rho \sum_{j=-M}^M \sum_{\substack{|m| \ge K, \\  m \in R_j}} \Big|\sum_{j=1}^N e^{2\pi i m p_j}\Big|^2.\\
\end{split}
\end{align}
In the computation above,  $K$ is  an absolute constant large enough so that the decay estimates in Corollary \ref{cor:phir} apply. %hold for $|\xi| > k$. 
Using Lemma \ref{lem:montgomerylemma} and taking $\kappa$ large enough (depending only on  $c$ from Lemma \ref{lem:montgomerylemma}, but independent of $N$ and $\theta$), the expression above can be bounded below by
\begin{align*}
\rho  \sum_{j=-M_{\theta}}^{M_{\theta}} (N\cdot \text{area}(R_j)/4 - cN^2)
& \ge \rho (2M_{\theta}+1)\left(\frac14 \kappa N^2-cN^2\right)\\
& \approx  \theta^{-1}  N^{-9/5} \cdot \frac{\theta X}{Y} \cdot N^2  \approx N^{-\frac95 + \frac35-\frac25 + 2} = N^{\frac25}.
\end{align*}
 % Taking $\kappa$ large enough then gives the desired order of $N^{2/5}$. Let us conclude by noting that the dependence on $\kappa$ of the constant in this lower bound is roughly $\kappa^{-3/5}$, and the constant has no dependence on $\theta$.
%Recall that the original expression ( 
%Since the left-hand side of \eqref{eqn:discrep_calc_restricted} the square of the $L_2$ discrepancy. Thus 
Taking square roots, we conclude that for $N \gtrsim \theta^{-5}$,
$$D(N, \mathcal{R}_{\Omega}) \gtrsim c N^{1/5},$$
where $c$ is independent of $\theta$.

\end{proof}

\begin{proof}[Proof of Corollary \ref{cor:thm}]
Theorem \ref{thm:restrictedinterval} automatically implies $D(N, \mathcal{R}_{\Omega}) \ge c\theta N^{1/5}$ for $N \ge  \gamma \theta^{-5}$. 
As in \eqref{eqn:trivialbound} in the proof of Theorem \ref{thm:allrotationslowerbound}, it is easy to see that for small rectangles of area $\frac{1}{4N}$, the discrepancy is at least $\frac{1}{4}$. Thus,  
when $N \le \gamma \theta^{-5}$, we trivially have $D(N, \mathcal{R}_{\theta}) \ge \frac{1}{4} \ge c \gamma^{-1/5} \theta N^{1/5}.$
\end{proof}

It is still unclear whether this is the best lower bound for the discrepancy with respect to restricted intervals. Indeed, in the setting of translations and dilations of a convex set, \cite{BT} prove that the bound is sharp by finding an explicit point set that exhibits an upper bound of order $N^{1/5}$; however, the proof techniques do not seem to translate in this setting. It is natural to expect that one should be able to improve the bound to order $N^{1/4}$, as in the case of all rotations. Diophantine properties play a large role in discrepancy (see \cite{BMPS11,BMPS16}), and %``type'' (in the Diophantine sense). Thus, 
since in any interval one can find a number of any Diophantine type, it seems on a heuristic level that the case of all rotations and the case of a restricted interval should not differ much with respect to the discrepancy. There remain many open problems in the area of directional discrepancy, and any further understanding of the relationship between the allowed rotation set $\Omega$ and the discrepancy of $\mathcal{R}_{\Omega}$ would be very interesting.

\end{document}